\documentclass{amsart}
\usepackage{amssymb,amsmath,amsthm,amsfonts}
\usepackage{mathtools}
\usepackage{bm}
\usepackage{microtype}
\usepackage{enumitem}
\setlist{nosep, leftmargin=1.5em}
\usepackage{tikz}
\usepackage{tikz-cd}
\usepackage{xcolor}
\definecolor{linkgreen}{rgb}{0,0.5,0}
\usepackage[colorlinks=true,allcolors=blue]{hyperref}
\usepackage[capitalize,noabbrev]{cleveref}
\usepackage{aliascnt}

\newcommand{\newaliastheorem}[3]{%
  \newaliascnt{#1}{theorem}%
  \newtheorem{#1}[#1]{#2}%
  \aliascntresetthe{#1}%
  \crefname{#1}{#2}{#3}%
  \Crefname{#1}{#2}{#3}%
}

\theoremstyle{plain}
\newtheorem{theorem}{Theorem}[section]
\newaliastheorem{proposition}{Proposition}{Propositions}
\newaliastheorem{lemma}{Lemma}{Lemmas}
\newaliastheorem{corollary}{Corollary}{Corollaries}
\newaliastheorem{conjecture}{Conjecture}{Conjectures}

\theoremstyle{definition}
\newaliastheorem{definition}{Definition}{Definitions}
\newaliastheorem{example}{Example}{Examples}
\newaliastheorem{notation}{Notation}{Notations}
\newaliastheorem{convention}{Convention}{Conventions}
\newaliastheorem{question}{Question}{Questions}

\theoremstyle{remark}
\newaliastheorem{remark}{Remark}{Remarks}
\newaliastheorem{observation}{Observation}{Observations}

\crefname{theorem}{Theorem}{Theorems}
\Crefname{theorem}{Theorem}{Theorems}
\crefname{section}{Section}{Sections}
\crefname{subsection}{Subsection}{Subsections}
\Crefname{section}{Section}{Sections}
\Crefname{subsection}{Subsection}{Subsections}

\newcommand{\1}{\mathbf 1}

\newcommand{\Z}{\mathbb{Z}}

\newcommand{\M}{\mathcal M}

\newcommand{\V}{\mathcal V}

\usepackage[backend=biber,style=alphabetic]{biblatex}
\usepackage[displaymath, mathlines]{lineno}

\title{Level Totients for Integer Mosaics}

\author{The Anh Dao}
\address{\newline Concordia International School Hanoi, Hanoi, 100000, Vietnam}
\email{theanh.dao@concordiahanoi.org}

\author{Tim Kasian}
\address{\newline The Taft School, Watertown, CT 06795, USA}
\email{tkasian@taftschool.org}

\author{Mostafa Mirabi}
\address{\newline The Taft School, Watertown, CT 06795, USA and \newline  Wesleyan University, Middletown, CT 06459, USA}
\email{mmirabi@wesleyan.edu}
\urladdr{https://sites.google.com/site/mostafamirabi}

\date{}
\keywords{Integer mosaic, prime tower factorization, level totient,
relative primality, M\"obius inversion, multiplicative function,
powerful numbers, Euler product}	
\subjclass[2020]{Primary 11A25; Secondary 11A05, 11A51, 11N25, 11N37}

\begin{document}
    
\begin{abstract}
We study a level analog of Euler's totient function for integer mosaics. Let $P_i(n)$ be the set of primes appearing in the first $i$ levels of the mosaic of $n$, and let $\varphi_i(n)$ count the integers $m\leq n$ for which $P_i(m)\cap P_i(n)=\varnothing$. We prove a M\"obius divisor-sum formula for $\varphi_i(n)$ and reduce it to a sum over a set $\V_{i,S}$ of powerful integers. If $S\neq\varnothing$, $i\geq 2$, and $q=\min S$, then
\[
        |\V_{i,S}\cap[1,x]|\sim C_{i,S}x^{1/q},
\]
with $C_{i,S}$ an explicit positive Euler-product constant. For fixed $S$, the density $\delta_i(S)$ of integers whose first $i$ levels avoid $S$ exists and has an Euler product, and the number of such integers up to $N$ is $\delta_i(S)N+O_{i,S}(N^{1/q})$.
Taking $S=P_i(n)$ gives
\[
        \frac{\varphi_i(n)}{n}=\delta_i(P_i(n))+O_\varepsilon(n^{-1/2+\varepsilon})
\]
uniformly in $n$.
\end{abstract}

\maketitle

\section{Introduction}
\label{sec:Intro}

An \emph{integer mosaic} is obtained by repeatedly applying the fundamental theorem of arithmetic to the exponents in the prime factorization of a positive integer. Mullin introduced this construction in \cite{Mullin1963Models,Mullin1963Functions}. Subsequent work studied arithmetic functions on mosaics, generalized multiplicativity, and mosaic divisors \cite{Mullin1967Mobius,MR2501345}. The same recursive construction is also called the \emph{prime tower factorization} in recent work on height, prime occurrence, and tree shape \cite{DevlinGnang,DeKoninckVerreault,ContiContucciIudelevich2024,ContiContucciIudelevich2025}.

If $ n=\prod_{j=1}^k p_j^{\alpha_j}$
is the canonical prime factorization of $n>1$, then the mosaic of $n$ is defined recursively by
\[
        \M(1)=1,\qquad\M(n)=\prod_{j=1}^k p_j^{\M(\alpha_j)}.
\]
For example,
\[
        995328=2^{12}3^5\quad\text{gives}\quad
        \M(995328)=2^{\M(12)}3^{\M(5)}=2^{2^2 3}3^5.
\]
The first level consists of the ordinary prime divisors of $n$, the second level consists of the prime divisors of the exponents, and later levels are obtained by continuing this process. We write $P_i(n)$ for the set of primes appearing in the first $i$ levels. Thus $P_1(n)$ is the usual set of prime divisors, while $P_2(n)$ also records primes dividing the exponents.

Gillman introduced the corresponding notion of $i$-relative primality \cite{Gillman1990,Gillman1992}: two positive integers $a$ and $b$ are $i$-relatively prime if $ P_i(a)\cap P_i(b)=\varnothing.$
The associated level totient is
\[
        \varphi_i(n)=\#\{m\leq n:P_i(m)\cap P_i(n)=\varnothing\}.
\]
For $i=1$, this is Euler's totient function. For $i\geq 2$, the condition also uses primes that occur in the exponents. For instance,
\[
        P_2(8)=P_2(2^3)=\{2,3\},\qquad P_2(9)=P_2(3^2)=\{2,3\},
\]
so $8$ and $9$ are ordinarily coprime but not $2$-relatively prime. Thus the level totient differs from Euler's function already at level $2$.

Bildhauser, Erickson, Tacoma, and Gillman introduced the functions $\varphi_i$ in their study of integer mosaics and asked for a method to compute them \cite{MR2501345}. We compute $\varphi_i(n)$ by introducing the indicator 
\[
        \chi_{i,S}(m)=\mathbf 1\{P_i(m)\cap S=\varnothing\},
\]
where $S$ is a finite set of primes.
Since $P_i(ab)=P_i(a)\cup P_i(b)$ whenever $\gcd(a,b)=1$, the function
$\chi_{i,S}$ is multiplicative. Its M\"obius transform
\[
        h_{i,S}=\chi_{i,S}*\mu
\]
therefore gives a divisor formula for the counting function associated to $S$.
Taking $S=P_i(n)$ gives the formula for $\varphi_i(n)$.

We next note that the prime-power values of $h_{i,S}$ are explicit. If $p\in S$, then
\[
        h_{i,S}(p)=-1,\qquad h_{i,S}(p^a)=0\quad(a\ge2).
\]
If $p\notin S$, then $h_{i,S}(p)=0$, while for $a\geq2$,
\[
        h_{i,S}(p^a)=
        \chi_{i-1,S}(a)-\chi_{i-1,S}(a-1).
\]
Thus primes in $S$ that contribute nonzero terms occur only to the first power, while primes outside $S$ can occur only with exponents $a\geq2$ at which $\chi_{i-1,S}(a)$ and $\chi_{i-1,S}(a-1)$ differ.

This motivates the definition
\[
        E_{i,S}=\{a\geq2:\chi_{i-1,S}(a)\neq\chi_{i-1,S}(a-1)\}.
\]
We define $\V_{i,S}$ to be the set of integers whose prime factors lie outside $S$, and whose nonzero prime exponents lie in $E_{i,S}$. Every element of $\V_{i,S}$ is powerful. After the contribution from primes in $S$ is summed separately, the divisor formula becomes a sum over $v\in\V_{i,S}$.

When $S\neq \varnothing$ and $i\geq 2$, the least element of $E_{i,S}$ is $q=\min S$. We prove the asymptotic
\[
        |\V_{i,S}\cap[1,x]|\sim C_{i,S}x^{1/q},
\]
where $C_{i,S}>0$ is given by an explicit Euler product. 

For fixed $S$, we prove that the density
\[
        \delta_i(S)=\lim_{N\to\infty}\frac1N\#\{m\leq N:P_i(m)\cap S=\varnothing\}
\]
exists and has an Euler product. The number of integers up to $N$ whose first $i$ levels avoid $S$ equals
\[
    \delta_i(S)N+O_{i,S}(N^{1/q}).
\] 
Taking $S=P_i(n)$ and using the bound $\prod_{p\in P_i(n)}p\leq n$ gives the uniform estimate
\[
    \frac{\varphi_i(n)}n=\delta_i(P_i(n))+O_\varepsilon(n^{-1/2+\varepsilon}).
\] 

The paper is organized as follows. \Cref{sec:relative-primality} records the level notation and proves multiplicativity of the fixed-set indicator. \Cref{sec:mobius-formula} proves the M\"obius formula. \Cref{sec:reduction} reduces the M\"obius formula to a sum over $\V_{i,S}$. \Cref{sec:exponent-sets} studies the sets $E_{i,S}$. \Cref{sec:size-reduced-sum} counts the members of $\V_{i,S}$. \Cref{sec:densities} proves the fixed-set density theorem, and \cref{sec:uniform-approximation} applies it to $S=P_i(n)$.

\section{Mosaic Levels and \texorpdfstring{$i$}{i}-Relative Primality}
\label{sec:relative-primality}

Throughout the paper, $S$ denotes a finite set of primes, and $i$ denotes a nonnegative integer unless otherwise stated.

\begin{definition}
\label{def:levels}
Let $n>1$ have canonical prime factorization $n=\prod_{j=1}^k p_j^{\alpha_j}$. The sets $\Lambda_r(n)$ of primes appearing at level $r$ of the mosaic of $n$ are defined recursively by
$\Lambda_0(n)=\varnothing,\,\Lambda_1(n)=\{p_1,\ldots,p_k\},$ and, for $r\geq 2$,
\[
        \Lambda_r(n)=\bigcup_{j=1}^k \Lambda_{r-1}(\alpha_j).
\]
We also set $\Lambda_r(1)=\varnothing$ for every $r\geq 0$.
\end{definition}

\begin{example}
For $n=995328=2^{12}3^5=2^{2^23}3^5$, the first three levels are
\[
        \Lambda_1(n)=\{2,3\},\qquad\Lambda_2(n)=\{2,3,5\},\qquad\Lambda_3(n)=\{2\}.
\]
\end{example}

\begin{definition}
\label{def:P_i}
For $n\in\Z^+$, let $P_i(n)$ be the set of primes appearing in the first $i$ levels of the mosaic of $n$:
\[
        P_i(n)=\bigcup_{r=1}^i \Lambda_r(n).
\]
Equivalently, $P_0(n)=\varnothing$, $P_i(1)=\varnothing$, and for $i\geq 1$,
\[
        P_i(n)=\{p_1,\ldots,p_k\}\cup \bigcup_{j=1}^k P_{i-1}(\alpha_j),
\]
where $n=\prod_{j=1}^k p_j^{\alpha_j}$ is the canonical prime factorization of $n>1$.
\end{definition}

\begin{definition}
\label{def:i-relative-prime}
Two positive integers $a$ and $b$ are called \emph{$i$-relatively prime} if
\[
        P_i(a)\cap P_i(b)=\varnothing.
\]
\end{definition}

The next lemma records how $P_i$ behaves on products of coprime integers.

\begin{lemma}
\label{lem:P_i-coprime-product}
If $\gcd(a,b)=1$, then $P_i(ab)=P_i(a)\cup P_i(b)$ for every $i\geq 0$.
\end{lemma}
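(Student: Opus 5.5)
The plan is to argue directly from the recursive description of $P_i$ in \cref{def:P_i}; no induction on $i$ is needed, because the recursion only reads off the exponents of $n$, and these are the same for $ab$ as for $a$ and $b$ separately. First I dispose of the degenerate cases. If $i=0$, all three sets are empty, since $P_0\equiv\varnothing$. If $a=1$ (or symmetrically $b=1$), then $ab=b$ and $P_i(1)=\varnothing$, so the identity is trivial. So assume $i\geq 1$ and $a,b>1$.

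Next I write the canonical factorizations $a=\prod_{j=1}^k p_j^{\alpha_j}$ and $b=\prod_{l=1}^m q_l^{\beta_l}$. Because $\gcd(a,b)=1$, the primes $p_j$ and $q_l$ are pairwise distinct. Hence, by uniqueness of factorization, the canonical factorization of $ab$ is exactly
\[
ab=\prod_{j=1}^k p_j^{\alpha_j}\prod_{l=1}^m q_l^{\beta_l}.
\]
That is, the multiset of (prime, exponent) pairs for $ab$ is the disjoint union of those for $a$ and $b$. This is the only point where coprimality is used, and it is the key step: without coprimality, exponents of a shared prime would add and the exponent-level primes would change.

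Finally, I apply the recursion in \cref{def:P_i} to $ab$:
\[
P_i(ab)=\{p_j\}\cup\{q_l\}\cup\bigcup_j P_{i-1}(\alpha_j)\cup\bigcup_l P_{i-1}(\beta_l).
\]
Regrouping the union gives $P_i(a)\cup P_i(b)$. I expect no real obstacle. The only care needed is to check that the equivalent recursive form of $P_i$ agrees with the level definition via $\Lambda_r$ (\cref{def:levels}). Alternatively, one can run the same argument levelwise: $\Lambda_1(ab)=\Lambda_1(a)\cup\Lambda_1(b)$ and $\Lambda_r(ab)=\Lambda_r(a)\cup\Lambda_r(b)$ for $r\geq 2$ by the same disjoint-factorization observation, and then take the union over $r\le i$.
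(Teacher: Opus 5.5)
Your proposal is correct and follows the paper's proof: dispose of $i=0$, use coprimality to see that concatenating the canonical factorizations of $a$ and $b$ gives the canonical factorization of $ab$, then apply the recursive form of $P_i$ and regroup the union. Your separate treatment of $a=1$ or $b=1$ is a small extra bit of care that the paper leaves implicit through empty factorizations.
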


\begin{proof}
For $i=0$, both sides are empty. Assume $i\geq 1$, and write $a = \prod_{j=1}^{k} p_j^{\alpha_j}$ and $b = \prod_{\ell=1}^{r} q_\ell^{\beta_\ell}$ in canonical prime factorizations. Since $\gcd(a, b) = 1$, the primes $p_1, \dots, p_k$ are distinct from $q_1, \dots, q_r$, so
    \[ab = p_1^{\alpha_1} \cdots p_k^{\alpha_k} q_1^{\beta_1} \cdots q_r^{\beta_r}\]
is the canonical factorization of $ab$. By \cref{def:P_i},
\begin{align*}
        P_i(ab)
        &=\{p_1,\ldots,p_k,q_1,\ldots,q_r\}
        \cup\bigcup_{j=1}^k P_{i-1}(\alpha_j)
        \cup\bigcup_{\ell=1}^r P_{i-1}(\beta_\ell)  \\
        &=P_i(a)\cup P_i(b). \qedhere
\end{align*}
\end{proof}

\begin{definition}
\label{def:chi}
For a finite set of primes $S$, define
\[
        \chi_{i,S}(m)=
        \begin{cases}
        1, & P_i(m)\cap S=\varnothing,\\
        0, & P_i(m)\cap S\neq\varnothing.
        \end{cases}
\]
\end{definition}

\begin{lemma}
\label{lem:chi-multiplicative}
For every finite set of primes $S$, the function $\chi_{i,S}$ is multiplicative.
\end{lemma}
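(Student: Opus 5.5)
The plan is to check the two defining conditions of multiplicativity directly, using \cref{lem:P_i-coprime-product} as the only real input.

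First, $\chi_{i,S}(1)=1$. By \cref{def:P_i} we have $P_i(1)=\varnothing$ for every $i\geq 0$, so $P_i(1)\cap S=\varnothing$.

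Next, let $a,b\in\Z^+$ with $\gcd(a,b)=1$. \Cref{lem:P_i-coprime-product} gives $P_i(ab)=P_i(a)\cup P_i(b)$, and hence
\[
        P_i(ab)\cap S=\bigl(P_i(a)\cap S\bigr)\cup\bigl(P_i(b)\cap S\bigr).
\]
A union of two sets is empty exactly when both sets are empty. So $\chi_{i,S}(ab)=1$ if and only if $\chi_{i,S}(a)=1$ and $\chi_{i,S}(b)=1$. Since $\chi_{i,S}$ takes only the values $0$ and $1$, this is the same as $\chi_{i,S}(ab)=\chi_{i,S}(a)\chi_{i,S}(b)$.

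The case $i=0$ needs no separate treatment: $P_0\equiv\varnothing$, so $\chi_{0,S}\equiv 1$, which is trivially multiplicative and is in any case covered by the argument above. There is no real obstacle here. All of the substance is in \cref{lem:P_i-coprime-product}, and the only point to watch is that coprimality is what allows that lemma to be applied.
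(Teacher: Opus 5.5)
Your proposal is correct and follows the paper's proof: you use \cref{lem:P_i-coprime-product} to distribute the intersection with $S$ over the union, observe that the union is empty exactly when both pieces are, and get $\chi_{i,S}(1)=1$ from $P_i(1)=\varnothing$. Your remark that the $i=0$ case is already covered by the general argument is accurate.
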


\begin{proof}
Suppose $\gcd(a,b)=1$. By \cref{lem:P_i-coprime-product},
\[
        P_i(ab)\cap S=\bigl(P_i(a)\cap S\bigr)\cup\bigl(P_i(b)\cap S\bigr).
\]
Thus $P_i(ab)\cap S=\varnothing$ if and only if both $P_i(a)\cap S$ and $P_i(b)\cap S$ are empty. Hence $\chi_{i,S}(ab)=\chi_{i,S}(a)\chi_{i,S}(b),$ and $\chi_{i,S}(1)=1$ since $P_i(1)=\varnothing$.
\end{proof}

\begin{lemma}
\label{lem:chi-prime-power}
For $i\geq 1$, prime $p$, and integer $k\geq 1$,
\[
        \chi_{i,S}(p^k)=
        \begin{cases}
        0, & p\in S,\\
        \chi_{i-1,S}(k), & p\notin S.
        \end{cases}
\]
\end{lemma}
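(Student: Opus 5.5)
The plan is to compute $P_i(p^k)$ directly from the recursive description in \cref{def:P_i} and then read off the value of $\chi_{i,S}(p^k)$ from \cref{def:chi}. Since $i\geq 1$ and $k\geq 1$, the integer $p^k>1$ has canonical factorization consisting of the single prime $p$ with exponent $\alpha_1=k$. The recursive formula in \cref{def:P_i} therefore gives
\[
        P_i(p^k)=\{p\}\cup P_{i-1}(k).
\]
This single identity carries the whole argument. One point to check is that it remains valid when $k=1$: there it reads $P_i(p)=\{p\}\cup P_{i-1}(1)=\{p\}$, which is consistent with the convention $P_{i-1}(1)=\varnothing$. When $i=1$ it reads $P_1(p^k)=\{p\}\cup P_0(k)=\{p\}$, which is consistent with $P_0=\varnothing$.

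Intersecting with $S$ gives
\[
        P_i(p^k)\cap S=\bigl(\{p\}\cap S\bigr)\cup\bigl(P_{i-1}(k)\cap S\bigr).
\]
The proof then splits into two cases.

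\emph{Case $p\in S$.} The intersection contains $p$ and so is nonempty. Hence $\chi_{i,S}(p^k)=0$.

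\emph{Case $p\notin S$.} The first piece $\{p\}\cap S$ is empty. The intersection is therefore empty exactly when $P_{i-1}(k)\cap S=\varnothing$, that is, exactly when $\chi_{i-1,S}(k)=1$. This gives $\chi_{i,S}(p^k)=\chi_{i-1,S}(k)$. Here $\chi_{i-1,S}$ makes sense because $i-1\geq 0$, and indeed $\chi_{0,S}\equiv 1$.

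There is no real obstacle in this argument. The only step needing care is the bookkeeping noted above: the recursive formula must be applied to $p^k$ with its single exponent $k$, and the edge cases $k=1$ and $i=1$ must be confirmed to agree with the conventions $P_r(1)=\varnothing$ and $P_0=\varnothing$.
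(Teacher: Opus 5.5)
Your proposal is correct and is essentially the paper's proof: both read off $P_i(p^k)=\{p\}\cup P_{i-1}(k)$ from \cref{def:P_i} and split into the cases $p\in S$ and $p\notin S$. Your extra checks of the edge cases $k=1$ and $i=1$ are consistent with the conventions and harmless.
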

\begin{proof}
By \cref{def:P_i},
\[
        P_i(p^k)=\{p\}\cup P_{i-1}(k).
\]
If $p\in S$, then $P_i(p^k)\cap S\neq \varnothing$, so $\chi_{i,S}(p^k)=0$. If $p\notin S$, then
\[
        P_i(p^k)\cap S=P_{i-1}(k)\cap S,
\]
and the claim follows.
\end{proof}

\section{A M\"obius Formula for the Level Totient}
\label{sec:mobius-formula}
We now define the $i$-level totient function $\varphi_i(n)$ and write it as a divisor sum. Since the fixed-set indicator $\chi_{i,S}$ is multiplicative, its M\"obius transform has prime-power values that can be computed explicitly. The rest of the paper studies the nonzero terms of this transform.

\begin{definition}
\label{def:level-totient}
For $n\in\Z^+$ and $i\geq 0$, define the \emph{$i$-level totient} by
\[
        \varphi_i(n)=\sum_{m=1}^n \chi_{i,P_i(n)}(m)=\sum_{m\leq n}\1\{P_i(m)\cap P_i(n)=\varnothing\}.
\]
\end{definition}

For $i=1$, this is Euler's totient function. For $i\geq 2$, it need not be multiplicative as a function of $n$. For example, one computes $\varphi_i(72)=22$, while $\varphi_i(8)\varphi_i(9)=9$ for every $i\geq 2$.

\begin{definition}
\label{def:h}
Let $h_{i,S}=\chi_{i,S}*\mu$, where $\mu$ is the ordinary M\"obius function. Equivalently,
\[
        h_{i,S}(n)=\sum_{d\mid n}\chi_{i,S}(d)\mu\left(\frac nd\right).
\]
\end{definition}

As the Dirichlet convolution of the multiplicative functions $\chi_{i,S}$ (\cref{lem:chi-multiplicative}) and $\mu$, the function $h_{i,S}$ is multiplicative.

\begin{theorem}
\label{thm:mobius-formula}
For every $n\in\Z^+$ and $i\geq 1$,
\[
        \varphi_i(n)=\sum_{d=1}^n h_{i,P_i(n)}(d)\left\lfloor\frac nd\right\rfloor.
\]
\end{theorem}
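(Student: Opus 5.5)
The plan is to use standard Möbius inversion and then swap the order of summation. Fix $n$ and $i\ge 1$, and write $S=P_i(n)$, a finite set of primes. Since $h_{i,S}=\chi_{i,S}*\mu$, convolving both sides with the constant function $\1$ and using $\mu*\1=\varepsilon$ (the identity for Dirichlet convolution) gives $\chi_{i,S}=h_{i,S}*\1$. Pointwise, this reads
\[
        \chi_{i,S}(m)=\sum_{d\mid m}h_{i,S}(d)\qquad(m\in\Z^+).
\]
This step uses only the associativity and commutativity of Dirichlet convolution. It does not use the multiplicativity from \cref{lem:chi-multiplicative}.

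Next, substitute this identity into \cref{def:level-totient}:
\[
        \varphi_i(n)=\sum_{m=1}^n\chi_{i,S}(m)=\sum_{m=1}^n\sum_{d\mid m}h_{i,S}(d).
\]
The double sum is finite, so the order of summation can be exchanged. Summing over $d$ first, $d$ ranges over $1\le d\le n$, and for each such $d$ the index $m$ ranges over the multiples of $d$ in $[1,n]$. There are exactly $\lfloor n/d\rfloor$ of these, so
\[
        \varphi_i(n)=\sum_{d=1}^n h_{i,S}(d)\left\lfloor\frac nd\right\rfloor.
\]
Since $S=P_i(n)$, this is the claimed formula.

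I do not expect any real obstacle here. The only point needing care is that the set $S=P_i(n)$ is held fixed while $m$ varies. The argument applies to $\chi_{i,S}$ for an arbitrary finite set $S$, and only at the end is $S$ specialized to $P_i(n)$. Because of this, the dependence of $S$ on $n$ does not interfere with the inversion. The hypothesis $i\ge1$ plays no essential role beyond matching the definitions used elsewhere; for $i=0$ the set $S$ is empty and the formula holds trivially.
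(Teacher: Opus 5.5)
Your proof is correct and matches the paper's argument. Like the paper, you fix $S=P_i(n)$, invert $h_{i,S}=\chi_{i,S}*\mu$ to get $\chi_{i,S}(m)=\sum_{d\mid m}h_{i,S}(d)$, substitute this into the definition of $\varphi_i(n)$, and swap the finite sums, counting the $\lfloor n/d\rfloor$ multiples of $d$ up to $n$; your side remarks that multiplicativity is not needed and that the case $i=0$ holds trivially are also correct.
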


\begin{proof}
Let $S=P_i(n)$. Since $h_{i,S}=\chi_{i,S}*\mu$, M\"obius inversion gives $\chi_{i,S}(m)=\sum_{d\mid m}h_{i,S}(d)$. Substituting into \cref{def:level-totient} and writing $m=dk$,
\[
        \varphi_i(n)=\sum_{m\le n}\sum_{d\mid m}h_{i,S}(d)=\sum_{d\le n}h_{i,S}(d)\sum_{k\leq n/d}1=\sum_{d\le n}h_{i,S}(d)\left\lfloor\frac nd\right\rfloor. \qedhere
\]
\end{proof}

We finish this section by proving a property of $h_{i,S}$ on prime powers that will be used for the reduction of the M\"obius sum in \cref{sec:reduction}.

\begin{lemma}
\label{lem:h-prime-power}
For $i\geq 1$, prime $p$, and integer $k\geq 1$,
\[
        h_{i,S}(p^k)=
        \begin{cases}
        -1, & p\in S \text{ and } k=1,\\
        0, & p\in S \text{ and } k\ge2,\\
        0, & p\notin S \text{ and } k=1,\\
        \chi_{i-1,S}(k)-\chi_{i-1,S}(k-1), & p\notin S \text{ and } k\ge2.
        \end{cases}
\]
\end{lemma}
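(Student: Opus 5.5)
The plan is to compute $h_{i,S}(p^k)$ directly from \cref{def:h}. The divisors of $p^k$ are $p^j$ with $0\le j\le k$, and $\mu(p^{k-j})$ vanishes unless $k-j\in\{0,1\}$. So the convolution collapses to two terms:
\[
        h_{i,S}(p^k)=\chi_{i,S}(p^k)-\chi_{i,S}(p^{k-1}),
\]
valid for every $k\ge1$. Here $\chi_{i,S}(p^0)=\chi_{i,S}(1)=1$ because $P_i(1)=\varnothing$.

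With this two-term identity, I evaluate each piece using \cref{lem:chi-prime-power}, which applies since $i\ge1$. I split into the four cases of the statement.

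\textbf{Case $p\in S$.} \cref{lem:chi-prime-power} gives $\chi_{i,S}(p^j)=0$ for every $j\ge1$.
\begin{itemize}
\item For $k=1$ we get $0-\chi_{i,S}(1)=-1$.
\item For $k\ge2$ both $p^k$ and $p^{k-1}$ have positive exponent, so we get $0-0=0$.
\end{itemize}

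\textbf{Case $p\notin S$.} \cref{lem:chi-prime-power} gives $\chi_{i,S}(p^j)=\chi_{i-1,S}(j)$ for $j\ge1$.
\begin{itemize}
\item For $k\ge2$, both exponents $k$ and $k-1$ are at least $1$. This yields $\chi_{i-1,S}(k)-\chi_{i-1,S}(k-1)$ directly.
\item For $k=1$ we get $\chi_{i,S}(p)-1=\chi_{i-1,S}(1)-1$. Since $P_{i-1}(1)=\varnothing$ by convention, $\chi_{i-1,S}(1)=1$, and the value is $0$.
\end{itemize}

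There is no real obstacle. The only point needing care is the boundary exponent $j=0$: \cref{lem:chi-prime-power} is stated only for $k\ge1$, so the value $\chi_{i,S}(1)=1$ must be supplied separately from \cref{lem:chi-multiplicative} (or from $P_i(1)=\varnothing$). For the $k=1$, $p\notin S$ case I also use $P_{i-1}(1)=\varnothing$, which holds for all $i-1\ge0$ by \cref{def:P_i}. This includes $i=1$, where $P_0\equiv\varnothing$.
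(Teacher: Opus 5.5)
Your proposal is correct and matches the paper's proof: both collapse the convolution to $\chi_{i,S}(p^k)-\chi_{i,S}(p^{k-1})$ using $\mu(p^r)=0$ for $r\ge 2$, then evaluate each case via \cref{lem:chi-prime-power}. Your explicit treatment of the boundary values $\chi_{i,S}(1)=1$ and $\chi_{i-1,S}(1)=1$ (for $p\notin S$, $k=1$) just spells out what the paper leaves implicit.
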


\begin{proof}
By definition,
\[
        h_{i,S}(p^k)=\sum_{j=0}^k \chi_{i,S}(p^j)\mu(p^{k-j}).
\]
Since $\mu(p^r)=0$ for $r\geq 2$, only the terms $j=k$ and $j=k-1$ can contribute. Thus
\[
        h_{i,S}(p^k)=\chi_{i,S}(p^k)-\chi_{i,S}(p^{k-1}).
\]
For $k=1$, this is $\chi_{i,S}(p)-1$, which equals $-1$ if $p\in S$ and $0$ otherwise. For $k\geq 2$, \cref{lem:chi-prime-power} gives $0$ when $p\in S$, and gives
\[
        \chi_{i-1,S}(k)-\chi_{i-1,S}(k-1)
\]
when $p\notin S$.
\end{proof}

\section{Reduction of the M\"obius Sum}
\label{sec:reduction}

We now identify the nonzero terms in the M\"obius formula. By \cref{lem:h-prime-power} the primes in $S$ that contribute nonzero terms occur only to the first power, while primes outside $S$ can occur only with exponents $a\ge2$ at which $\chi_{i-1,S}(a)$ and $\chi_{i-1,S}(a-1)$ differ. We begin by recalling the notion of a powerful integer.

\begin{definition}
\label{def:powerful}
A positive integer $v$ is \emph{powerful} if $p^2\mid v$ for every prime $p\mid v$. By convention, $1$ is powerful.
\end{definition}

Since the prime-power formula in \cref{lem:h-prime-power} depends only on the difference between consecutive values of $\chi_{i-1,S}$ for primes outside $S$, we isolate this difference through the following notation.

For $k\geq 2$, define
\[
        \Delta_{i,S}(k)=\chi_{i-1,S}(k)-\chi_{i-1,S}(k-1),
\]
and set $E_{i,S}=\{k\geq 2:\Delta_{i,S}(k)\ne0\}.$

By \cref{lem:h-prime-power}, if $p\notin S$ and $k\geq 2$, then $h_{i,S}(p^k)=\Delta_{i,S}(k).$

\begin{definition}
\label{def:V}
For a finite set of primes $S$, let $\V_{i,S}$ be the set of positive integers $v$ such that, if
\[
        v=\prod_{j=1}^r p_j^{\alpha_j}
\]
is its canonical prime factorization, then:
\begin{enumerate}[label=(\roman*)]
        \item $p_j\notin S$ for every $j$;
        \item $\alpha_j\in E_{i,S}$ for every $j$.
\end{enumerate}
We include $1$ in $\V_{i,S}$.
\end{definition}

Note that every element of $\V_{i,S}$ is powerful since $E_{i,S}\subseteq\{2,3,4,\ldots\}$.

\begin{theorem}
\label{thm:support-split}
If $h_{i,S}(d)\neq 0$, then $d$ factors uniquely as $d=uv$, where $u\mid R_S$, $R_S=\prod_{p\in S}p$, and $v\in\V_{i,S}$. Moreover,
\[
        h_{i,S}(d)=\mu(u)h_{i,S}(v).
\]
Conversely, if $u\mid R_S$, $v\in\V_{i,S}$, and $\gcd(u,v)=1$, then $h_{i,S}(uv)=\mu(u)h_{i,S}(v)$ and this value is nonzero.
\end{theorem}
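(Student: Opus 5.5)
The plan is to use multiplicativity of $h_{i,S}$ together with the prime-power values in \cref{lem:h-prime-power}. Throughout I assume $i\geq 1$, as in that lemma. Write $d=\prod_j p_j^{k_j}$ in canonical form. Since $h_{i,S}$ is multiplicative, $h_{i,S}(d)=\prod_j h_{i,S}(p_j^{k_j})$, so $h_{i,S}(d)\neq 0$ exactly when every factor $h_{i,S}(p_j^{k_j})$ is nonzero.

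For the forward direction, suppose $h_{i,S}(d)\neq0$ and read off each prime factor from \cref{lem:h-prime-power}.
\begin{enumerate}
\item If $p_j\in S$, then $k_j=1$ and $h_{i,S}(p_j)=-1$.
\item If $p_j\notin S$, then $k_j\geq 2$ and $\Delta_{i,S}(k_j)\neq0$, i.e.\ $k_j\in E_{i,S}$.
\end{enumerate}
Let $u$ be the product of the prime factors of $d$ lying in $S$, and let $v=d/u$. Then $u$ is squarefree with all primes in $S$, so $u\mid R_S$. Also $v$ has all primes outside $S$ and all exponents in $E_{i,S}$, so $v\in\V_{i,S}$.

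The factorization is unique for the following reason. In any factorization $d=u'v'$ with $u'\mid R_S$ and $v'\in\V_{i,S}$, every prime of $u'$ lies in $S$ and every prime of $v'$ lies outside $S$. Hence $\gcd(u',v')=1$, and $u'$ must be the $S$-part of $d$, which is $u$. Consequently $v'=v$.

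For the value, use $\gcd(u,v)=1$ and multiplicativity to get $h_{i,S}(d)=h_{i,S}(u)h_{i,S}(v)$. Since $h_{i,S}(u)=\prod_{p\mid u}(-1)=\mu(u)$ for squarefree $u$, this gives $h_{i,S}(d)=\mu(u)h_{i,S}(v)$.

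For the converse, take $u\mid R_S$, $v\in\V_{i,S}$ and $\gcd(u,v)=1$. Multiplicativity again gives $h_{i,S}(uv)=h_{i,S}(u)h_{i,S}(v)=\mu(u)h_{i,S}(v)$.
\begin{enumerate}
\item The factor $\mu(u)=\pm1$ because $u$ is squarefree.
\item The factor $h_{i,S}(v)=\prod_j\Delta_{i,S}(\alpha_j)$ by \cref{lem:h-prime-power}, since each $p_j\notin S$ and $\alpha_j\geq2$. Each $\Delta_{i,S}(\alpha_j)$ is nonzero because $\alpha_j\in E_{i,S}$.
\end{enumerate}
So $h_{i,S}(uv)\neq 0$. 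In fact $\gcd(u,v)=1$ holds automatically, since the primes of $u$ lie in $S$ and those of $v$ do not.

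There is no real obstacle here. The one point needing care is that the uniqueness claim relies on the two factors being supported on the disjoint prime sets $S$ and its complement.
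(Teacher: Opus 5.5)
Your proof is correct and follows essentially the same route as the paper. Both use multiplicativity of $h_{i,S}$ and the local values from \cref{lem:h-prime-power} to force $k_p=1$ for $p\in S$ and $k_p\in E_{i,S}$ for $p\notin S$, get uniqueness from the disjoint prime supports, and check the converse through the same local factors; your explicit assumption $i\geq 1$ is appropriate, since the paper's argument also depends on \cref{lem:h-prime-power}, which requires it.
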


\begin{proof}
Write $d=\prod_{p\mid d}p^{k_p}$. Since $h_{i,S}$ is multiplicative,
\[
        h_{i,S}(d)=\prod_{p\mid d}h_{i,S}(p^{k_p}).
\]
If $h_{i,S}(d)\neq 0$, then every local factor is nonzero. By \cref{lem:h-prime-power}, this forces $k_p=1$ for $p\in S$, and $k_p\in E_{i,S}$ for $p\notin S$. Hence, with
\[
        u=\prod_{\substack{p\mid d\\ p\in S}}p,
        \qquad
        v=\prod_{\substack{p\mid d\\ p\notin S}}p^{k_p},
\]
we have $u\mid R_S$, $v\in\V_{i,S}$, and $d=uv$. The factorization is unique because it separates the prime divisors of $d$ according to membership in $S$.

Since $\gcd(u,v)=1$, multiplicativity gives $h_{i,S}(d)=h_{i,S}(u)h_{i,S}(v)$. The integer $u$ is squarefree and all of its prime divisors lie in $S$, so \cref{lem:h-prime-power} gives
\[
        h_{i,S}(u)=\prod_{p\mid u}(-1)=\mu(u).
\]
This proves the first assertion. The converse follows from the same local factor calculation: if $u\mid R_S$ and $v\in\V_{i,S}$, then every local factor in $h_{i,S}(uv)$ is nonzero, and multiplicativity gives the displayed formula.
\end{proof}

\begin{definition}
\label{def:phi}
For a finite set of primes $S$, define
\[
        \Phi_S(x)=\sum_{u\mid R_S}\mu(u)\left\lfloor\frac xu\right\rfloor,
        \qquad
        R_S=\prod_{p\in S}p.
\]
Equivalently, $\Phi_S(x)$ is the number of positive integers at most $x$ with no prime factor in $S$.
\end{definition}

\begin{theorem}
\label{thm:reduced-formula}
For every $n\in\Z^+$ and $i\geq 1$, writing $S=P_i(n)$, we have
\[
        \varphi_i(n)=\sum_{\substack{v\in\V_{i,S}\\ v\leq n}}h_{i,S}(v)\Phi_S\left(\left\lfloor\frac nv\right\rfloor\right).
\]
\end{theorem}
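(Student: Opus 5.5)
We start from \cref{thm:mobius-formula} with $S=P_i(n)$:
\[
\varphi_i(n)=\sum_{d\le n}h_{i,S}(d)\left\lfloor\frac nd\right\rfloor .
\]
Terms with $h_{i,S}(d)=0$ contribute nothing, so only the support of $h_{i,S}$ matters. By \cref{thm:support-split}, every $d$ in that support factors uniquely as $d=uv$ with $u\mid R_S$, $v\in\V_{i,S}$, $\gcd(u,v)=1$, and $h_{i,S}(d)=\mu(u)h_{i,S}(v)$. Conversely, every such pair gives a nonzero term. The sum over the support of $h_{i,S}$ in $[1,n]$ is therefore a sum over pairs $(u,v)$ with these properties and $uv\le n$.

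The coprimality condition is automatic. The primes of $v$ lie outside $S$ by condition (i) of \cref{def:V}, while the primes of $u$ lie in $S$. Hence the pairs are exactly $(u,v)\in\{u\mid R_S\}\times\V_{i,S}$ with $uv\le n$, and
\[
\varphi_i(n)=\sum_{\substack{v\in\V_{i,S}\\ v\le n}}h_{i,S}(v)\sum_{\substack{u\mid R_S\\ u\le n/v}}\mu(u)\left\lfloor\frac{n}{uv}\right\rfloor .
\]
In the inner sum the restriction $u\le n/v$ can be dropped, because $\lfloor n/(uv)\rfloor=0$ when $u>n/v$.

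We then use the nested floor identity $\lfloor n/(uv)\rfloor=\lfloor \lfloor n/v\rfloor/u\rfloor$. The inner sum becomes
\[
\sum_{u\mid R_S}\mu(u)\left\lfloor \frac{\lfloor n/v\rfloor}{u}\right\rfloor=\Phi_S\!\left(\left\lfloor \frac nv\right\rfloor\right)
\]
by \cref{def:phi}, which gives the stated formula.

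The only points needing care are the bijection between the support of $h_{i,S}$ and these pairs, and the nested floor identity. The bijection is exactly what \cref{thm:support-split} provides. The floor identity is standard: write $n=qv+r$ with $0\le r<v$, so $q=\lfloor n/v\rfloor$, and note that $\lfloor (qv+r)/(uv)\rfloor=\lfloor q/u\rfloor$ because $r<v$. I expect no real obstacle beyond recording these two facts.
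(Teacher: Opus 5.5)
Your proposal is correct and follows essentially the same route as the paper. It reindexes the nonzero terms of \cref{thm:mobius-formula} by the pairs $(u,v)$ from \cref{thm:support-split}, drops the vacuous restriction $uv\le n$ because those floors vanish, and uses $\lfloor n/(uv)\rfloor=\lfloor\lfloor n/v\rfloor/u\rfloor$ to recognize $\Phi_S(\lfloor n/v\rfloor)$, with your explicit checks of the automatic coprimality and the nested floor identity filling in details the paper leaves implicit.
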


\begin{proof}
By \cref{thm:mobius-formula}, $\varphi_i(n)=\sum_{d\leq n}h_{i,S}(d)\lfloor n/d\rfloor$.
By \cref{thm:support-split}, each nonzero term factors uniquely as $d=uv$ with
$u\mid R_S$, $v\in\V_{i,S}$, and $h_{i,S}(d)=\mu(u)h_{i,S}(v)$, so
\[
        \varphi_i(n)=\sum_{\substack{v\in\V_{i,S}\\ v\leq n}}h_{i,S}(v)\sum_{\substack{u\mid R_S\\ uv\leq n}}\mu(u)\left\lfloor\frac n{uv}\right\rfloor.
\]
For fixed $v$ the terms with $uv>n$ are zero, and $\lfloor n/(uv)\rfloor=\lfloor\lfloor n/v\rfloor/u\rfloor$, so the inner sum equals
\[
\sum_{u\mid R_S}\mu(u)\lfloor\lfloor n/v\rfloor/u\rfloor=\Phi_S(\lfloor n/v\rfloor). \qedhere
\]
\end{proof}

\begin{corollary}
\label{cor:i1-euler}
For $i=1$, writing $S=P_1(n)$,
\[
        \varphi_1(n)=\Phi_S(n)=\varphi(n).
\]
\end{corollary}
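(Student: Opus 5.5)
For $i=1$ the plan is to show that $\V_{1,S}=\{1\}$, so that \cref{thm:reduced-formula} reduces to the single term $v=1$, and then to identify $\Phi_S(n)$ with $\varphi(n)$.

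First I will compute $E_{1,S}$. By definition, $\Delta_{1,S}(k)=\chi_{0,S}(k)-\chi_{0,S}(k-1)$ for $k\ge2$. Since $P_0(m)=\varnothing$ for every $m$ by \cref{def:P_i}, we get $\chi_{0,S}\equiv 1$. Hence $\Delta_{1,S}(k)=0$ for all $k\geq2$, and $E_{1,S}=\varnothing$. An integer $v>1$ has at least one prime exponent, and that exponent would have to lie in $E_{1,S}$. So no $v>1$ satisfies condition (ii) of \cref{def:V}, and $\V_{1,S}=\{1\}$.

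Next I apply \cref{thm:reduced-formula} with $i=1$ and $S=P_1(n)$. Only $v=1$ survives, with $h_{1,S}(1)=1$ because $h_{1,S}$ is multiplicative. This gives $\varphi_1(n)=\Phi_S(\lfloor n\rfloor)=\Phi_S(n)$.

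Finally, $P_1(n)$ is exactly the set of prime divisors of $n$, so $R_S=\prod_{p\mid n}p$ is the radical of $n$. By the equivalent description in \cref{def:phi}, $\Phi_S(n)$ counts the $m\le n$ with no prime factor dividing $n$, that is, the $m\le n$ with $\gcd(m,n)=1$. This count is $\varphi(n)$. One can also see it from $\sum_{u\mid R_S}\mu(u)\,n/u=n\prod_{p\mid n}(1-1/p)$, where the floors are exact because $u\mid n$.

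The case $n=1$ needs a separate check: $S=\varnothing$, $R_S=1$, and $\Phi_S(1)=1=\varphi(1)$. No step is a real obstacle; the only point needing care is noticing that $\chi_{0,S}\equiv1$ makes $E_{1,S}$ empty.
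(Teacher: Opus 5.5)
Your proposal is correct and follows the paper's proof essentially verbatim. In both, $\chi_{0,S}\equiv 1$ forces $E_{1,S}=\varnothing$ and hence $\V_{1,S}=\{1\}$, so the reduced formula collapses to $\Phi_S(n)$, which counts the integers up to $n$ coprime to $n$. Your additional remarks (the exact-floor computation and the separate check at $n=1$) are valid but not needed, since the general argument already covers $n=1$.
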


\begin{proof}
Since $P_0(m)=\varnothing$ for every $m$, we have $\chi_{0,S}(m)=1$ for all $m$. Hence $\Delta_{1,S}(k)=0$ for every $k\geq 2$, so $E_{1,S}=\varnothing$ and $\V_{1,S}=\{1\}$. The reduced formula gives
\[
        \varphi_1(n)=h_{1,S}(1)\Phi_S(n)=\Phi_S(n).
\]
Since $S=P_1(n)$ is the set of prime divisors of $n$, $\Phi_S(n)$ counts the positive integers at most $n$ that are relatively prime to $n$, namely $\varphi(n)$.
\end{proof}

\section{The Exponent Sets}
\label{sec:exponent-sets}

The reduced formula depends on the exponent set $E_{i,S}$. We prove two structural results: the first identifies the smallest possible exponent for arbitrary $i \geq 2$, and the second describes $E_{2,S}$ through congruence conditions modulo $R_S = \prod_{p \in S}p$.

\begin{proposition}
\label{prop:min-exponent}
    For $S \neq \varnothing$ and $i \geq 2$, the smallest element of $E_{i,S}$ is $q =\min S$.
\end{proposition}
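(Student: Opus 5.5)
We need to show two things: $q\in E_{i,S}$, and no $k$ with $2\le k<q$ lies in $E_{i,S}$. Recall $E_{i,S}=\{k\ge 2:\chi_{i-1,S}(k)\ne\chi_{i-1,S}(k-1)\}$ and $i-1\ge 1$.

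\emph{Step 1: integers below $q$ avoid $S$ at every level.} For every integer $m$ with $1\le m<q$ and every $j\ge 0$, we claim $P_j(m)\cap S=\varnothing$. The key observation is that every prime in $P_j(m)$ is at most $m$. We prove this by strong induction on $m$, using the recursive description in \cref{def:P_i}. For $m=1$ the set is empty. For $m>1$, write $m=\prod p_\ell^{\alpha_\ell}$; then $P_j(m)$ consists of the primes $p_\ell\le m$ together with primes in $P_{j-1}(\alpha_\ell)$. Since $\alpha_\ell<m$, induction shows these primes are at most $\alpha_\ell<m$. So every prime in $P_j(m)$ is at most $m<q$, and such a prime cannot lie in $S$ because $q=\min S$. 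Hence $\chi_{i-1,S}(m)=1$ for all $1\le m\le q-1$.

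\emph{Step 2: nothing in $[2,q-1]$ lies in $E_{i,S}$.} For $2\le k\le q-1$, both $k$ and $k-1$ lie in $[1,q-1]$. By Step 1, $\Delta_{i,S}(k)=1-1=0$, so $k\notin E_{i,S}$.

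\emph{Step 3: $q\in E_{i,S}$.} Since $q\ge 2$, the exponent $q$ is a valid candidate. Its canonical factorization is $q=q^1$, so $P_{i-1}(q)\ni q$ because $i-1\ge 1$. This gives $\chi_{i-1,S}(q)=0$. Meanwhile $\chi_{i-1,S}(q-1)=1$ by Step 1, since $q-1\ge 1$ and $q-1<q$. Therefore $\Delta_{i,S}(q)=-1\ne 0$, and $q$ is the smallest element of $E_{i,S}$.

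The only nontrivial point is the bound in Step 1, that every prime in $P_j(m)$ is at most $m$. It needs the strong induction on $m$, which goes through because exponents are strictly smaller than $m$: $\alpha\le\log_2 m<m$. The hypothesis $i\ge 2$ is used exactly once, in Step 3, to guarantee $i-1\ge 1$ so that level one is included.
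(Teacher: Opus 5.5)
Your proof is correct and has the same three-part structure as the paper's: show that every $m<q$ satisfies $P_j(m)\cap S=\varnothing$ for all $j$, deduce $\Delta_{i,S}(t)=0$ for $2\le t<q$, and compute $\Delta_{i,S}(q)=0-1=-1$ using $i-1\ge 1$. The only difference is in the first step. The paper inducts on the level $j$, using that $p_\ell<q$ and $\alpha_\ell<q$ whenever $m<q$. You instead induct on $m$ to get the $S$-independent bound that every prime in $P_j(m)$ is at most $m$, which also follows at once from the paper's later \cref{lem:radical-bound}.
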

\begin{proof}
We first prove, by induction on $j$, that $P_j(m)\cap S=\varnothing$ for every positive integer $m<q$. 

The case $j = 0$ is immediate from $P_0(m)=\varnothing$. Suppose the claim holds for $j=a$. Let $m<q$ and write $m=\prod_{\ell=1}^{r}p_\ell^{\alpha_\ell}$ in canonical prime factorization. Each prime divisor $p_\ell$ satisfies $p_\ell<q$, so $p_\ell\notin S$. Also, since $p_\ell^{\alpha_\ell}\leq m<q$, we have $\alpha_\ell<q$. By the inductive hypothesis, $P_a(\alpha_\ell)\cap S=\varnothing$ for every $\ell$. Hence, by \cref{def:P_i},
\[ 
P_{a+1}(m) \cap S = \left( \{p_1,\ldots,p_r\} \cup \bigcup_{\ell=1}^{r}P_a(\alpha_\ell)\right) \cap S = \varnothing. 
\]
Now we show $q = \min E_{i,S}$. Suppose $2\leq t<q$. Since both $t$ and $t-1$ are less than $q$, the induction claim gives $\chi_{i-1,S}(t)=\chi_{i-1,S}(t-1)=1.$ Hence $\Delta_{i,S}(t)=0$, so no $t$ with $2\leq t<q$ lies in $E_{i,S}$. It remains to show that $q$ lies in $E_{i,S}$. Since $i\geq 2$, \cref{def:P_i} gives
\[
P_{i-1}(q)=\{q\}\cup P_{i-2}(1)=\{q\}.
\]
Thus $\chi_{i-1,S}(q)=0$. Since $q-1<q$, the induction claim gives $\chi_{i-1,S}(q-1)=1.$ Therefore
\[ 
\Delta_{i,S}(q) = \chi_{i-1,S}(q)-\chi_{i-1,S}(q-1) = -1 \neq 0.
\]
Thus $q\in E_{i,S}$, while no smaller integer $t\geq 2$ lies in $E_{i,S}$.
\end{proof}

\begin{theorem}
\label{thm:E2-structure}
For $k\ge2$, we have $k\in E_{2,S}$ if and only if exactly one of $k$ and $k-1$ is
coprime to $R_S$. In particular, membership depends only on $k\bmod R_S$, so
$E_{2,S}$ is periodic modulo $R_S$.
\end{theorem}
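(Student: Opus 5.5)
The plan is to compute $\chi_{1,S}$ explicitly and then read off $\Delta_{2,S}$. With $i=2$ we have $\Delta_{2,S}(k)=\chi_{1,S}(k)-\chi_{1,S}(k-1)$. By \cref{def:P_i}, $P_1(m)$ is the set of ordinary prime divisors of $m$, with $P_1(1)=\varnothing$. Hence $\chi_{1,S}(m)=1$ exactly when no prime of $S$ divides $m$, that is, exactly when $\gcd(m,R_S)=1$. This also holds for $m=1$, since $\gcd(1,R_S)=1$ and $P_1(1)=\varnothing$. It also holds when $S=\varnothing$: then $R_S=1$, every $m$ is coprime to $R_S$, and $\chi_{1,S}\equiv 1$.

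With this identification, $\Delta_{2,S}(k)=\1\{\gcd(k,R_S)=1\}-\1\{\gcd(k-1,R_S)=1\}$ for $k\geq 2$. A difference of two $\{0,1\}$-valued quantities is nonzero if and only if the two values differ. So $k\in E_{2,S}$ if and only if exactly one of $k$ and $k-1$ is coprime to $R_S$, which is the first assertion. It is also worth noting that $\Delta_{2,S}(k)=\pm1$, with the sign recording which of the two is coprime; this fact is not needed here.

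For periodicity, I will use that $\gcd(m,R_S)$ depends only on $m \bmod R_S$, because $\gcd(m,R_S)=\gcd(m+R_S,R_S)$. Hence both indicators, and so the membership condition, are unchanged under $k\mapsto k+R_S$ for $k\geq 2$. Therefore $E_{2,S}$ is periodic modulo $R_S$, in the sense that for $k\geq 2$, $k\in E_{2,S}$ if and only if $k+R_S\in E_{2,S}$.

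I expect no real obstacle. The only points needing care are the edge cases $m=1$ (where $k=2$ and $k-1=1$) and $S=\varnothing$, where $E_{2,S}=\varnothing$ is consistent with the periodicity statement.
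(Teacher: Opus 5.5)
Your proposal is correct and follows essentially the same argument as the paper: you identify $\chi_{1,S}(m)=1$ with $\gcd(m,R_S)=1$, read off that $\Delta_{2,S}(k)\neq 0$ exactly when the two coprimality indicators differ, and deduce periodicity from the fact that coprimality to $R_S$ depends only on the residue modulo $R_S$. Your explicit checks of the edge cases $m=1$ and $S=\varnothing$ go slightly beyond the paper, which leaves them implicit.
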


\begin{proof}
By definition, $k\in E_{2,S}$ if and only if $\chi_{1,S}(k)\ne\chi_{1,S}(k-1)$. Since $P_1(m)$
is the set of prime divisors of $m$, $\chi_{1,S}(m)=1 \iff\gcd(m,R_S)=1$. Hence,
the condition holds if and only if exactly one of $k,k-1$ is coprime to $R_S$. Both
coprimality conditions depend only on $k\bmod R_S$, which gives the periodicity.
\end{proof}

\begin{corollary}
\label{cor:E2-density}
The natural density of $E_{2,S}$ is
\[
        2\left(\prod_{p\in S}\Bigl(1-\tfrac1p\Bigr)-\prod_{p\in S}\Bigl(1-\tfrac2p\Bigr)\right).
\]
\end{corollary}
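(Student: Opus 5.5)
By \cref{thm:E2-structure}, membership of $k\ge2$ in $E_{2,S}$ depends only on $k \bmod R_S$. So the natural density of $E_{2,S}$ exists and equals $1/R_S$ times the number of residues $k \bmod R_S$ for which exactly one of $k$, $k-1$ is coprime to $R_S$. Excluding $k=1$ does not change the density. The plan is to count these residues with the Chinese remainder theorem.

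Let $A$ be the set of residues $k \bmod R_S$ with $\gcd(k,R_S)=1$, and $B$ the set with $\gcd(k-1,R_S)=1$. The residues we want form the symmetric difference $A\triangle B$, so
\[
|A\triangle B| = |A|+|B|-2|A\cap B|.
\]
The shift $k\mapsto k-1$ is a bijection of $\Z/R_S\Z$, hence $|A|=|B|=\varphi(R_S)=R_S\prod_{p\in S}(1-1/p)$.

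For $A\cap B$, note that $R_S$ is squarefree. By the Chinese remainder theorem, the condition that both $k$ and $k-1$ are coprime to $R_S$ splits into independent conditions modulo each $p\in S$: $k\not\equiv 0$ and $k\not\equiv1 \pmod p$. This leaves $p-2$ admissible residues modulo $p$. Hence
\[
|A\cap B|=\prod_{p\in S}(p-2)=R_S\prod_{p\in S}\Bigl(1-\tfrac2p\Bigr).
\]
This holds even when $2\in S$: then the factor is $0$, which is correct since one of $k,k-1$ is always even.

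Dividing by $R_S$ gives density
\[
2\prod_{p\in S}\Bigl(1-\tfrac1p\Bigr)-2\prod_{p\in S}\Bigl(1-\tfrac2p\Bigr),
\]
which is the stated formula. The case $S=\varnothing$ also works: both products equal $1$, the density is $0$, and indeed $E_{2,\varnothing}=\varnothing$.

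The only step needing care is the passage from periodicity to density. A set periodic modulo $R_S$ has density equal to the proportion of admissible residues, because each complete block of $R_S$ consecutive integers contains exactly that many members. The partial block at the end and the finitely many excluded small $k$ contribute $O(R_S)$ elements, which is $o(N)$. I expect no real obstacle here.
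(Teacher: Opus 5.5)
Your proof is correct and follows essentially the same route as the paper. You reduce via \cref{thm:E2-structure} to counting residues modulo $R_S$, obtain $\varphi(R_S)$ and $\prod_{p\in S}(p-2)$ by the Chinese remainder theorem, and use the shift symmetry between $k$ and $k-1$; your symmetric-difference count $|A|+|B|-2|A\cap B|$ and your periodicity-to-density step spell out what the paper states more briefly.
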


\begin{proof}
By \cref{thm:E2-structure}, we count residue classes $a\bmod R_S$ for which exactly one of $a,a-1$ is coprime to $R_S$. The number of classes with $a$ coprime to $R_S$ is $\varphi(R_S)=\prod_{p\in S}(p-1)$, and, by the Chinese remainder theorem, the number with both $a$ and $a-1$ coprime to $R_S$ is $\prod_{p\in S}(p-2)$, since modulo each $p\in S$ the class of $a$ must avoid $0$ and $1$. Hence 
\[
\prod_{p\in S}(p-1)-\prod_{p\in S}(p-2)
\] 
classes have $a$ coprime but $a-1$ not, and by symmetry the same number have $a-1$ coprime but $a$ not. Adding the symmetric results and dividing by $R_S=\prod_{p\in S}p$ gives the density.
\end{proof}

\begin{example}
If $S=\{2\}$, then $R_S=2$, and $\chi_{1,S}(k)=1$ exactly when $k$ is odd. Hence $\chi_{1,S}(k)$ differs for consecutive integers, so
\[
        E_{2,\{2\}}=\{2,3,4,\ldots\}.
\]
Thus $\V_{2,\{2\}}$ is the set of odd powerful integers.
\end{example}

\section{Counting the Reduced Sum}
\label{sec:size-reduced-sum}
The reduced formula of \cref{thm:reduced-formula} sums over $\V_{i,S}$ in place of all divisors $d\le n$, so the number of its terms is $|\V_{i,S}\cap[1,n]|$. We estimate this count. Every element of $\V_{i,S}$ is $q$-full, which gives the order of magnitude $x^{1/q}$. We then sharpen it to the asymptotic $C_{i,S}x^{1/q}$ with an explicit positive constant.

\begin{proposition}
\label{prop:V-upper}
For $S\neq\varnothing$, $q = \min S$, and $i\geq 2$,
\[
        |\V_{i,S}\cap[1,x]|=O_{i,S}(x^{1/q}).
\]
\end{proposition}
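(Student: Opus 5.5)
By \cref{prop:min-exponent}, every element of $E_{i,S}$ is at least $q=\min S$. So every $v\in\V_{i,S}$ is $q$-full: each prime dividing $v$ divides it to exponent at least $q$. The plan is to drop the exact exponent conditions and the condition $p\notin S$, bound $|\V_{i,S}\cap[1,x]|$ by the number of $q$-full integers up to $x$, and then show that this count is $O_q(x^{1/q})$. Since $q$ depends only on $S$, the implied constant will depend only on $S$, which in particular is $O_{i,S}$.

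For the $q$-full count, write each exponent $a\geq q$ as $a=qb+c$ with $b\ge1$ and $0\le c\le q-1$. Then every $q$-full $v$ factors as
\[
v=m^q\prod_{c=q+1}^{2q-1}w_c^{\,c},
\]
where the $w_c$ are squarefree and pairwise coprime. Concretely, the exponents in $\{q,\dots,2q-1\}$ are the ones absorbed into the extra factors, with the remainder adjusted accordingly. A cleaner way to organize this is with the representation $v=a_0^q a_1^{q+1}\cdots a_{q-1}^{2q-1}$, where $a_1,\dots,a_{q-1}$ are squarefree. This representation exists because every integer $a\geq q$ can be written as $a=qs+r$ with $s\ge 1$ and $r\in\{0,\dots,q-1\}$, and then $p^a=(p^{s-1})^q\,p^{q+r}$.

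Counting then gives
\[
\#\{v\le x \text{ $q$-full}\}\le\sum_{a_1,\dots,a_{q-1}}\Bigl(\frac{x}{a_1^{q+1}\cdots a_{q-1}^{2q-1}}\Bigr)^{1/q}
= x^{1/q}\prod_{r=1}^{q-1}\sum_{a\ge1}a^{-(q+r)/q}.
\]
Each series converges because $(q+r)/q>1$ for $r\ge1$. This yields the bound $O_q(x^{1/q})$.

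The only delicate point is checking that the decomposition $p^a=(p^{s-1})^q p^{q+r}$ really covers every exponent $a\ge q$, including $a=q$, where $s=1$, $r=0$ and the factor is $p^q$, absorbed into $a_0$. Once that is verified, the convergence of the product of zeta-type series finishes the proof. An alternative I would keep in reserve is a Rankin-type bound on $\sum_{v\ q\text{-full}}v^{-\sigma}$ for $\sigma>1/q$, but that only gives $x^{1/q+\varepsilon}$. The explicit decomposition above is therefore preferable.
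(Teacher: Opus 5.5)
Your proof is correct and follows the paper's route: \cref{prop:min-exponent} makes every element of $\V_{i,S}$ $q$-full, and the count of $q$-full integers up to $x$ is $O_q(x^{1/q})$. The only difference is that the paper cites the Erd\H{o}s--Szekeres estimate for this last step, whereas you reprove it with the standard decomposition $v=a_0^q a_1^{q+1}\cdots a_{q-1}^{2q-1}$; that argument is valid and makes the proof self-contained.
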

\begin{proof}
By \cref{prop:min-exponent}, every exponent appearing in an element of $\V_{i,S}$ is at least $q$. Thus each element of $\V_{i,S}$ is $q$-full, and the number of $q$-full integers up to $x$ is $O_q(x^{1/q})$ by the Erd\H{o}s--Szekeres estimate \cite{ErdosSzekeres1934}. The claim follows.
\end{proof}

We now record the density of squarefree integers with no prime factor in $S$.

\begin{lemma}
\label{lem:squarefree-asymptotic}
For every finite set of primes \(S\),
\[
        Q_S(y)\sim\frac1{\zeta(2)}\prod_{p\in S}\left(1+\frac1p\right)^{-1}y,
\]
where
\[
        Q_S(y)=\sum_{\substack{m\le y\\ \mu^2(m)=1\\ (m,R_S)=1}}1.
\]
\end{lemma}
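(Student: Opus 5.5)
We will prove the lemma with a standard convolution argument: write the indicator as a Dirichlet convolution, then sum term by term. Let $f(m)=\mu^2(m)\1\{(m,R_S)=1\}$. This function is multiplicative, with $f(p)=1$ and $f(p^k)=0$ for $k\ge2$ when $p\notin S$, and $f(p^k)=0$ for all $k\ge1$ when $p\in S$. We write $f=\1*g$, where $g=f*\mu$ is multiplicative. Its local factors are determined by
\[
\sum_{k\ge0} g(p^k)p^{-ks}=(1-p^{-s})\sum_{k\ge0}f(p^k)p^{-ks}.
\]
For $p\notin S$ this equals $(1-p^{-s})(1+p^{-s})=1-p^{-2s}$. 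For $p\in S$ it equals $1-p^{-s}$. Hence $g(p^2)=-1$ and $g(p^k)=0$ otherwise for $p\notin S$, while $g(p)=-1$ and $g(p^k)=0$ otherwise for $p\in S$.

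Next we compute
\[
Q_S(y)=\sum_{d\le y}g(d)\left\lfloor \frac yd\right\rfloor=y\sum_{d\le y}\frac{g(d)}d+O\Bigl(\sum_{d\le y}|g(d)|\Bigr).
\]
The series $\sum_d |g(d)|/d$ converges, since its Euler product is $\prod_{p\notin S}(1+p^{-2})\prod_{p\in S}(1+p^{-1})$, which is finite because $S$ is finite. Therefore $\sum_{d\le y}g(d)/d$ tends to
\[
\sum_{d}\frac{g(d)}{d}=\prod_{p\notin S}(1-p^{-2})\prod_{p\in S}(1-p^{-1}).
\]
Rewriting $\prod_{p\notin S}(1-p^{-2})=\zeta(2)^{-1}\prod_{p\in S}(1-p^{-2})^{-1}$ and using $(1-p^{-1})/(1-p^{-2})=(1+p^{-1})^{-1}$, this limit equals $\zeta(2)^{-1}\prod_{p\in S}(1+1/p)^{-1}$, which is the claimed constant.

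The only point needing care is the error term $\sum_{d\le y}|g(d)|=o(y)$. The support of $g$ consists of integers of the form $u w^2$ with $u\mid R_S$ and $w$ squarefree and coprime to $R_S$. The number of such $d\le y$ is at most $2^{|S|}\sqrt y$, since $|g|\le 1$. This bound is $o(y)$, which completes the asymptotic, and in fact gives an error term of $O_S(\sqrt y)$.
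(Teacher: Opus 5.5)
Your argument is correct, but it follows a different route from the paper. The paper computes the Dirichlet series $\frac{\zeta(s)}{\zeta(2s)}\prod_{p\in S}(1+p^{-s})^{-1}$ of the indicator, reads off the residue at $s=1$, and invokes the Wiener--Ikehara Tauberian theorem, using nonnegativity of the coefficients and the analytic behaviour on $\Re s\ge 1$. You instead write the indicator as $\1*g$, observe that $g$ is supported on integers $uw^2$ with $u\mid R_S$ and $w$ squarefree and prime to $R_S$, and sum the floor functions directly. The constant comes out the same way in both: your $\sum_d g(d)/d$ is exactly the paper's residue. The trade-offs differ, however. The paper's proof is shorter on the page, but it rests on a deep analytic theorem and gives no rate of convergence. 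Yours is elementary and self-contained, and it uses the same M\"obius-convolution-plus-support-counting technique the paper itself employs later in \cref{lem:density-series} and \cref{lem:truncation}. It also yields an explicit error term. On that last point, your closing claim of $O_S(\sqrt y)$ needs one more line than you wrote. Besides $\sum_{d\le y}|g(d)|\le 2^{|S|}\sqrt y$, you must also control the tail $y\sum_{d>y}|g(d)|/d\le y\sum_{u\mid R_S}u^{-1}\sum_{w>\sqrt{y/u}}w^{-2}$, which is $O_S(\sqrt y)$ by the same support description. For the asymptotic itself, the absolute convergence of $\sum_d g(d)/d$ that you established suffices.
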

\begin{proof}
Let $a_S(m)=\mathbf 1\{\mu^2(m)=1,\ (m,R_S)=1\}$ and note that $a_S$ is multiplicative. For $\Re s>1$, its Dirichlet series is
\begin{align*}
    \sum_{m=1}^{\infty}\frac{a_S(m)}{m^s} 
    &= \prod_p\left(\sum_{e=0}^{\infty}\frac{a_S(p^e)}{p^{es}}\right) \\
    &= \prod_{p\in S}1 \prod_{p\notin S}\left(1+p^{-s}\right) =\frac{\zeta(s)}{\zeta(2s)}\prod_{p\in S}\left(1+p^{-s}\right)^{-1}.
\end{align*}
This Dirichlet series has a simple pole at $s=1$, with residue
\[
        \frac1{\zeta(2)}\prod_{p\in S}\left(1+\frac1p\right)^{-1}.
\]
The remaining factor is holomorphic in a half-plane $\Re s>1-\eta$ for some $\eta>0$. Since the coefficients $a_S(m)$ are nonnegative, the Ikehara--Wiener theorem gives
\[
        Q_S(y)=\sum_{m\le y}a_S(m)\sim\frac1{\zeta(2)}\prod_{p\in S}\left(1+\frac1p\right)^{-1}y. \qedhere
\]
\end{proof}

\begin{theorem}
\label{thm:V-asymptotic}
Let \(S\neq \varnothing\), let \(i\geq 2\), and let \(q=\min S\). Then
\[
        |\V_{i,S}\cap[1,x]|\sim C_{i,S}x^{1/q},
\]
where
\[
        C_{i,S}=\frac{1}{\zeta(2)}\prod_{p\in S}\left(1+\frac1p\right)^{-1}\prod_{p\notin S}\frac{1+\sum_{k\in E_{i,S}}p^{-k/q}}{1+p^{-1}}.
\]
In particular, \(C_{i,S}>0\).
\end{theorem}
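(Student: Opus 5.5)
Every $v\in\V_{i,S}$ factors uniquely as $v=m^q w$. Here $m$ is the squarefree product of the primes $p\mid v$ with exponent exactly $q$, so $(m,R_S)=1$. The cofactor $w\in\V_{i,S}$ is coprime to $m$, and all of its exponents lie in $E'=E_{i,S}\setminus\{q\}$. By \cref{prop:min-exponent}, every element of $E'$ is at least $q+1$.

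Let $W$ be the set of such $w$. Then
\[
        |\V_{i,S}\cap[1,x]|=\sum_{w\in W}\#\{m\le (x/w)^{1/q}:\mu^2(m)=1,\ (m,R_S w)=1\}.
\]
The inner count is $Q_{S\cup\{p\mid w\}}\bigl((x/w)^{1/q}\bigr)$. By \cref{lem:squarefree-asymptotic}, applied to the finite set $S\cup\{p\mid w\}$, it is asymptotic to
\[
        \frac{1}{\zeta(2)}\prod_{p\in S}\Bigl(1+\tfrac1p\Bigr)^{-1}\prod_{p\mid w}\Bigl(1+\tfrac1p\Bigr)^{-1}\Bigl(\frac xw\Bigr)^{1/q}.
\]
Summing termwise, the candidate constant is
\[
        \frac{1}{\zeta(2)}\prod_{p\in S}\Bigl(1+\tfrac1p\Bigr)^{-1}\sum_{w\in W}\frac{g(w)}{w^{1/q}},
\]
where $g$ is multiplicative with $g(p^k)=(1+1/p)^{-1}$. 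The series has the Euler product
\[
        \prod_{p\notin S}\Bigl(1+\frac{\sum_{k\in E'}p^{-k/q}}{1+p^{-1}}\Bigr)=\prod_{p\notin S}\frac{1+p^{-1}+\sum_{k\in E'}p^{-k/q}}{1+p^{-1}}.
\]
Since $q\in E_{i,S}$, we have $p^{-1}+\sum_{k\in E'}p^{-k/q}=\sum_{k\in E_{i,S}}p^{-k/q}$, which gives exactly $C_{i,S}$.

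This Euler product converges absolutely. Each local factor is $1+O(p^{-(q+1)/q})$, and $(q+1)/q>1$. Every factor is at least $1$, so $C_{i,S}>0$.

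The main obstacle is justifying the termwise passage to the limit, because the lemma gives only an asymptotic with no uniformity in the set of excluded primes. I would use dominated convergence. First, the trivial bound $Q_T(y)\le y$ gives a summable majorant $(x/w)^{1/q}\le x^{1/q}w^{-1/q}$, since $\sum_{w\in W}w^{-1/q}$ converges by the same Euler-product estimate. Second, for each fixed $w$ the normalized term $x^{-1/q}Q_{S\cup\{p\mid w\}}\bigl((x/w)^{1/q}\bigr)$ tends to its limit, where for $w>x$ the term is simply $0$. Dominated convergence, applied to the series over $w$ with $x\to\infty$, then gives $|\V_{i,S}\cap[1,x]|\,x^{-1/q}\to C_{i,S}$.

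As an alternative for the fixed-$w$ limits, one can split at $w\le W_0$ and $w>W_0$: apply the lemma to the finitely many small $w$, and bound the tail by $x^{1/q}\sum_{w>W_0}w^{-1/q}$, which can be made $\le\varepsilon x^{1/q}$ by choosing $W_0$ large.
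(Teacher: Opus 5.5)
Your proof is correct, and it takes a genuinely different route from the paper's.

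\textbf{The paper's route.} The paper factors the Dirichlet series as $F_{i,S}=B_S H_{i,S}$. Here $B_S$ generates the $m^q$ with $m$ squarefree and coprime to $R_S$, treating $m$ as independent of the rest. The overlap is corrected by $H_{i,S}$, whose coefficients $c_b$ are signed because of the alternating expansion of $(1+p^{-qs})^{-1}$. The paper then writes $|\V_{i,S}\cap[1,x]|=\sum_b c_b\,Q_S\bigl((x/b)^{1/q}\bigr)$ and applies dominated convergence. It uses \cref{lem:squarefree-asymptotic} only for the single fixed set $S$, at the cost of verifying $\sum_b |c_b| b^{-1/q}<\infty$.

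\textbf{Your route.} You use the bijection $v=m^q w$ with $\gcd(m,w)=1$ and keep the coprimality explicit by enlarging the excluded set to $S\cup\{p\mid w\}$. Every term is then nonnegative. The constant comes out directly as an Euler product, and positivity is immediate because each local factor is at least $1$. No signed coefficients need to be controlled.

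\textbf{The cost, and why it is harmless.} You invoke the squarefree lemma for infinitely many different sets, one for each $w$. As you correctly note, dominated convergence needs only the pointwise limit for each fixed $w$, together with the majorant $Q_T(y)\le y$, which is uniform in $T$.

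\textbf{Trade-off.} The paper's decomposition is the one that would extend more easily to an error term. It needs a remainder estimate for $Q_S$ with $S$ fixed, whereas yours would need remainders for $Q_T$ uniform over all $T=S\cup\{p\mid w\}$.
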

\begin{proof}
By \cref{prop:min-exponent}, $q=\min E_{i,S}$, and the Dirichlet series of
$\V_{i,S}$ factors as
\[
        F_{i,S}(s)=\sum_{v\in\V_{i,S}}v^{-s}=\prod_{p\notin S}\Bigl(1+\sum_{k\in E_{i,S}}p^{-ks}\Bigr)=B_S(s)\,H_{i,S}(s),
\]
where
\[
        B_S(s)=\prod_{p\notin S}(1+p^{-qs})=\sum_{\substack{m\geq1\\\mu^2(m)=1\\(m,R_S)=1}}m^{-qs},
        \qquad
        H_{i,S}(s)=\prod_{p\notin S}\frac{1+\sum_{k\in E_{i,S}}p^{-ks}}{1+p^{-qs}}.
\]
We first justify the Dirichlet expansion of $H_{i,S}$ at $s=1/q$. For $p\notin S$, write the local factor
\[
        H_p(s)=\frac{1+\sum_{k\in E_{i,S}}p^{-ks}}{1+p^{-qs}}.
\]
Since $q=\min E_{i,S}$,
\[
        H_p(s)=1+\frac{\sum_{\substack{k\in E_{i,S}\\ k>q}}p^{-ks}}{1+p^{-qs}}.
\]
The expansion $(1+p^{-qs})^{-1}=\sum_{r\geq0}(-1)^r p^{-qrs}$ converges absolutely at $\sigma=\Re(s)=1/q$, so the sum of the absolute values of the coefficients of $H_p(s)-1$ at $\sigma=1/q$ is at most
\[
        \Bigl(\sum_{\substack{k\in E_{i,S}\\ k>q}}p^{-k/q}\Bigr)\Bigl(\sum_{r\geq0}p^{-r}\Bigr)\leq\frac{\sum_{k\geq q+1}p^{-k/q}}{1-p^{-1}} = O_q \bigl(p^{-(q+1)/q}\bigr).
\]
Since $\sum_p p^{-(q+1)/q}<\infty$, the Euler product $H_{i,S}=\prod_{p\notin S}H_p$ converges absolutely at $s=1/q$, with $H_{i,S}(1/q)>0$ because each $H_p(1/q)>0$. Its Dirichlet expansion therefore satisfies
\[
        H_{i,S}(s)=\sum_{b\geq1}\frac{c_b}{b^s},
        \qquad
        \sum_{b\geq1}\frac{|c_b|}{b^{1/q}}<\infty.
\]
Expanding $F_{i,S}=B_S H_{i,S}$ and summing the coefficients with $bm^q\leq x$,
\[
        |\V_{i,S}\cap[1,x]|=\sum_{b\geq1}c_b\,Q_S\bigl((x/b)^{1/q}\bigr).
\]
By \cref{lem:squarefree-asymptotic}, $x^{-1/q}Q_S((x/b)^{1/q})\to\kappa_S b^{-1/q}$ and is at most $b^{-1/q}$, where
\[
        \kappa_S=\frac{1}{\zeta(2)}\prod_{p\in S}(1+1/p)^{-1}.
\]
Since $\sum_b|c_b|b^{-1/q}<\infty$, dominated convergence gives
\[
        |\V_{i,S}\cap[1,x]|\sim\kappa_S H_{i,S}(1/q)\,x^{1/q}.
\]
At $s=1/q$ one has $p^{-qs}=p^{-1}$, so $\kappa_S H_{i,S}(1/q)=C_{i,S}$, and $C_{i,S}>0$ because $\kappa_S>0$ and $H_{i,S}(1/q)>0$.
\end{proof}

\begin{corollary}
\label{cor:term-count}
For fixed $S \neq \varnothing$, $i \geq 2$, and $q = \min S$, the reduced formula in \cref{thm:reduced-formula} has $\sim C_{i,S}n^{1/q}$ nonzero summands.
\end{corollary}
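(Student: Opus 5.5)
The corollary follows by combining \cref{thm:reduced-formula} with \cref{thm:V-asymptotic}. The only real issue is the interpretation of ``nonzero summands,'' and we fix it as follows. By \cref{thm:reduced-formula}, the reduced formula for $\varphi_i(n)$ is indexed by $v\in\V_{i,S}$ with $v\le n$. The number of such summands is therefore exactly $|\V_{i,S}\cap[1,n]|$, and \cref{thm:V-asymptotic} with $x=n$ gives $|\V_{i,S}\cap[1,n]|\sim C_{i,S}n^{1/q}$. Here $S$ is held fixed while $n\to\infty$, so that $C_{i,S}$ and $q$ do not depend on $n$.

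Next we check that every indexed term has a nonzero coefficient, so that the count is genuinely a count of nonzero terms. For $v\in\V_{i,S}$, the converse part of \cref{thm:support-split}, applied with $u=1$ (trivially coprime to $v$), gives $h_{i,S}(v)\neq 0$. The remaining factor $\Phi_S(\lfloor n/v\rfloor)$ is at least $1$ whenever $v\le n$, since the integer $1$ has no prime factor in $S$ and is counted. Hence every summand in the reduced formula is nonzero, and the number of nonzero summands equals $|\V_{i,S}\cap[1,n]|$ exactly.

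The one subtle point is that in \cref{thm:reduced-formula} the set $S$ is $P_i(n)$, which varies with $n$, whereas the corollary fixes $S$. We therefore read the statement as a count of the summands of the general fixed-$S$ sum
\[
\sum_{\substack{v\in\V_{i,S}\\ v\le n}} h_{i,S}(v)\,\Phi_S\bigl(\lfloor n/v\rfloor\bigr),
\]
which specializes to the reduced formula for any $n$ with $P_i(n)=S$. Under this reading, no estimate beyond \cref{thm:V-asymptotic} is needed.
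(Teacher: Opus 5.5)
Your proposal is correct and matches the paper's proof: each $v\in\V_{i,S}$ with $v\le n$ gives a nonzero term because $h_{i,S}(v)\neq0$ and $\Phi_S(\lfloor n/v\rfloor)\ge1$, and \cref{thm:V-asymptotic} then gives $\sim C_{i,S}n^{1/q}$ such terms. You also point out that $S=P_i(n)$ varies with $n$, so the statement should be read for the fixed-$S$ sum, which specializes to the reduced formula whenever $P_i(n)=S$; the paper leaves this point implicit.
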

\begin{proof}
Every $v\in\V_{i,S}$ with $v\leq n$ gives a nonzero term of the reduced formula, since $h_{i,S}(v)\neq 0$ and $\Phi_S(\lfloor n/v\rfloor)\geq 1$. By \cref{thm:V-asymptotic} the formula therefore has $\sim C_{i,S}n^{1/q}$ terms.
\end{proof}

\section{Densities for Fixed Prime Sets}
\label{sec:densities}

Fix a finite set of primes $S$. We now study the density of positive integers
whose first $i$ mosaic levels avoid $S$.

\begin{definition}
\label{def:A}
For $N\in\Z^+$, define
\[
        A_{i,S}(N)=\sum_{m\leq N}\chi_{i,S}(m).
\]
When the limit exists, define
\[
        \delta_i(S)=\lim_{N\to\infty}\frac{A_{i,S}(N)}N.
\]
\end{definition}

We first show the limit exists and evaluate it through the M\"obius transform
$h_{i,S}$.

\begin{lemma}
\label{lem:density-series}
For every finite set of primes $S$ and every $i\geq 1$, the density $\delta_i(S)$ exists and satisfies
\[
        \delta_i(S)=\sum_{d=1}^{\infty}\frac{h_{i,S}(d)}{d},
\]
with absolute convergence.
\end{lemma}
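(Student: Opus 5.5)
The plan is to start from the identity $A_{i,S}(N)=\sum_{d\le N}h_{i,S}(d)\lfloor N/d\rfloor$. This is the proof of \cref{thm:mobius-formula} with $P_i(n)$ replaced by an arbitrary finite set $S$; the argument never used the special choice of $S$. Writing $\lfloor N/d\rfloor=N/d-\{N/d\}$ gives
\[
\frac{A_{i,S}(N)}{N}=\sum_{d\le N}\frac{h_{i,S}(d)}{d}-\frac1N\sum_{d\le N}h_{i,S}(d)\Bigl\{\frac Nd\Bigr\}.
\]
It therefore suffices to show two things. First, $\sum_d |h_{i,S}(d)|/d<\infty$, which gives absolute convergence and the convergence of the first sum to $\sum_{d\ge1} h_{i,S}(d)/d$. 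Second, the error term is $o(1)$.

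For absolute convergence I will use that $h_{i,S}$ is multiplicative and that, by \cref{lem:h-prime-power}, $|h_{i,S}(p^k)|\le 1$ always. More precisely, $|h_{i,S}(p^k)|$ vanishes unless $k=1$ with $p\in S$, or $k\ge2$ with $p\notin S$. This yields the Euler-product bound
\[
\sum_d\frac{|h_{i,S}(d)|}{d}\le\prod_{p\in S}\Bigl(1+\frac1p\Bigr)\prod_{p\notin S}\Bigl(1+\sum_{k\ge2}p^{-k}\Bigr).
\]
The right side is finite because $\sum_{k\ge2}p^{-k}\le 2/p^2$ and $\sum_p p^{-2}<\infty$. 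Equivalently, via \cref{thm:support-split}, the support lies in $\{uv: u\mid R_S,\ v \text{ powerful}\}$, and the sum of $1/v$ over powerful $v$ converges. This also covers $S=\varnothing$ and $i=1$ uniformly.

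For the error term I will bound $\frac1N\sum_{d\le N}|h_{i,S}(d)|$. This is the step needing the most care, though it is standard. I would use the Kronecker-type argument: fix $D$, and split the sum as
\[
\frac1N\sum_{d\le N}|h_{i,S}(d)|\le\frac1N\sum_{d\le D}|h_{i,S}(d)|+\sum_{D<d\le N}\frac{|h_{i,S}(d)|}{d},
\]
using $1\le N/d$ for $d\le N$. The first term tends to $0$ as $N\to\infty$. The second term is a tail of a convergent series, so it is small once $D$ is large. Alternatively, one can count directly: the support up to $N$ has size at most $2^{|S|}$ times the number of powerful numbers up to $N$, which is $O(\sqrt N)$. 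Either route gives an error of $o(1)$, so $A_{i,S}(N)/N\to\sum_d h_{i,S}(d)/d$. That proves both the existence of $\delta_i(S)$ and the stated formula.
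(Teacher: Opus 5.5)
Your proposal is correct and follows essentially the same route as the paper. Both arguments use the identity $A_{i,S}(N)=\sum_{d\le N}h_{i,S}(d)\lfloor N/d\rfloor$, obtain absolute convergence from $|h_{i,S}|\le 1$ and the sparse support of $h_{i,S}$, and handle the error term $\frac1N\sum_{d\le N}|h_{i,S}(d)|\to 0$ by Kronecker's lemma; the only differences are that you prove Kronecker's lemma by hand and lead with an Euler-product bound (which avoids citing the reciprocal sum of powerful numbers), whereas the paper uses the $uv$ factorization, which you also note as an equivalent route.
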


\begin{proof}
By M\"obius inversion of $\chi_{i,S}(m)$ we have $A_{i,S}(N)=\sum_{d\leq N}h_{i,S}(d)\lfloor N/d\rfloor$. By \cref{lem:h-prime-power} the prime-power values of $h_{i,S}$ are at most $1$ in absolute value, hence $|h_{i,S}(d)|\leq1$. By \cref{thm:support-split} every $d$ with $h_{i,S}(d)\neq0$ is $d=uv$ with $u\mid R_S$ and $v\in\V_{i,S}$. Therefore
\[
        \sum_{d\geq1}\frac{|h_{i,S}(d)|}{d} \leq\left(\sum_{u\mid R_S}\frac{1}{u}\right)\sum_{v\in\V_{i,S}}\frac{1}{v}<\infty,
\]
since $\V_{i,S}$ consists of powerful integers and $\sum_{v\text{ powerful}}1/v=\zeta(2)\zeta(3)/\zeta(6)$ \cite{Golomb1970}. Writing $\lfloor N/d\rfloor=N/d-\theta_{N,d}$ with $0\leq\theta_{N,d}<1$,
\[
        \frac{A_{i,S}(N)}N=\sum_{d\leq N}\frac{h_{i,S}(d)}d-\frac{1}{N}\sum_{d\leq N}h_{i,S}(d)\theta_{N,d}.
\]
The first sum converges to $\sum_{d\ge1}h_{i,S}(d)/d$. The second is at most
\[
        \frac{1}{N}\sum_{d\leq N}|h_{i,S}(d)|,
\] 
which tends to $0$ by Kronecker's lemma because $\sum_d|h_{i,S}(d)|/d$ converges. Hence $\delta_i(S)$ exists and equals $\sum_{d\ge1}h_{i,S}(d)/d$.
\end{proof}

Since $h_{i,S}$ is multiplicative, the series of \cref{lem:density-series} factors
as an Euler product, which we now compute.

\begin{theorem}
\label{thm:delta-euler-product}
For every finite set of primes $S$ and every $i\geq 1$,
\[
        \delta_i(S)=\prod_{p\in S}\left(1-\frac1p\right)\prod_{p\notin S}\left(1-\frac1p\right)\left(1+\sum_{k=1}^{\infty}\frac{\chi_{i-1,S}(k)}{p^k}\right).
\]
\end{theorem}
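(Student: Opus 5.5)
Starting from \cref{lem:density-series}, which says $\delta_i(S)=\sum_{d\ge1}h_{i,S}(d)/d$ with absolute convergence, I will use the multiplicativity of $h_{i,S}$ to pass to an Euler product. Absolute convergence lets us write
\[
        \sum_{d\geq1}\frac{h_{i,S}(d)}{d}=\prod_p\Bigl(\sum_{k\geq0}\frac{h_{i,S}(p^k)}{p^k}\Bigr).
\]
The justification is the usual one. The partial product over $p\le y$ equals the sum of $h_{i,S}(d)/d$ over $y$-smooth $d$, and the difference from the full sum is bounded by the tail $\sum_{d>y}|h_{i,S}(d)|/d$, which tends to $0$. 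Each local series also converges absolutely, because $|h_{i,S}(p^k)|\le 1$ by \cref{lem:h-prime-power}.

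Next I evaluate the local factors using \cref{lem:h-prime-power}. If $p\in S$, only $k=0,1$ contribute, which gives the factor $1-1/p$. If $p\notin S$, the $k=1$ term vanishes, and for $k\ge2$ we have $h_{i,S}(p^k)=\chi_{i-1,S}(k)-\chi_{i-1,S}(k-1)$. A cleaner route is to write $h_{i,S}(p^k)=\chi_{i,S}(p^k)-\chi_{i,S}(p^{k-1})$ for every $k\ge1$ (this identity appears in the proof of \cref{lem:h-prime-power}). Setting $c_k=\chi_{i,S}(p^k)$, with $c_0=1$ and $c_k=\chi_{i-1,S}(k)$ for $k\ge1$ and $p\notin S$ by \cref{lem:chi-prime-power}, the local factor becomes
\[
        \sum_{k\ge0}\frac{c_k-c_{k-1}}{p^k}\quad(c_{-1}:=0)
        =\sum_{k\ge0}\frac{c_k}{p^k}-\frac1p\sum_{k\ge0}\frac{c_k}{p^k}
        =\Bigl(1-\frac1p\Bigr)\Bigl(1+\sum_{k\ge1}\frac{\chi_{i-1,S}(k)}{p^k}\Bigr).
\]
Splitting the sum into two series is legitimate because $0\le c_k\le1$, so both series converge absolutely. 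Multiplying the local factors gives the stated formula.

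The step most in need of care is the passage from the Dirichlet series to the Euler product, and in particular checking that the infinite product over $p\notin S$ converges to the value of the series rather than merely formally. Absolute convergence of $\sum|h_{i,S}(d)|/d$ (established in \cref{lem:density-series}) together with the smooth-number truncation argument handles this. As a sanity check, the $p\notin S$ factors are $1+O(p^{-2})$, since the $k=1$ term cancels when $\chi_{i-1,S}(1)=1$, so the product converges absolutely in any case. The case $i=1$ is consistent: $\chi_{0,S}\equiv1$ makes the $p\notin S$ factors equal to $1$, recovering $\prod_{p\in S}(1-1/p)$.
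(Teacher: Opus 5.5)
Your proposal is correct and follows essentially the same route as the paper: you pass from the absolutely convergent series of \cref{lem:density-series} to an Euler product and telescope the local factors for $p\notin S$. Writing $c_k=\chi_{i,S}(p^k)$ for all $k\ge0$ is a slightly tidier version of the paper's computation with $\Delta_{i,S}(k)$, and your smooth-number truncation argument justifies the Euler-product step, which the paper asserts without proof.
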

\begin{proof}
By \cref{lem:density-series} and multiplicativity of $h_{i,S}$, it follows that $\delta_i(S)$ has the Euler product
\[
        \delta_i(S)=\prod_p\left(1+\sum_{k=1}^{\infty}\frac{h_{i,S}(p^k)}{p^k}\right).
\]
For $p\in S$, \cref{lem:h-prime-power} gives $h_{i,S}(p)=-1$ and $h_{i,S}(p^k)=0$ for $k\geq 2$, so the local factor is $1-1/p$. For $p\notin S$, \cref{lem:h-prime-power} gives $h_{i,S}(p)=0$ and $h_{i,S}(p^k)=\Delta_{i,S}(k)$ for $k\geq 2$. Thus, the local factor is
\[
        1+\sum_{k=2}^{\infty}\frac{\Delta_{i,S}(k)}{p^k}.
\]
Since $\Delta_{i,S}(k) = \chi_{i-1,S}(k)-\chi_{i-1,S}(k-1)$ and $\chi_{i-1,S}(1)=1$, we have
\begin{align*}
        1+\sum_{k=2}^{\infty}\frac{\Delta_{i,S}(k)}{p^k}
        &=1+\left(\sum_{k=1}^{\infty}\frac{\chi_{i-1,S}(k)}{p^k}-\frac{1}{p}\right)-\frac1p\sum_{k=1}^{\infty}\frac{\chi_{i-1,S}(k)}{p^k} \\
        &=\left(1-\frac{1}{p}\right)\left(1+\sum_{k=1}^{\infty}\frac{\chi_{i-1,S}(k)}{p^k}\right).
\end{align*}
Combining the local factors gives the stated product.
\end{proof}

We finish the section by recovering the usual density of integers avoiding a fixed set of primes and giving an explicit formula for the Euler product for $i = 2$. 

\begin{corollary}
\label{cor:delta-legendre}
For every finite set of primes $S$,
\[
        \delta_1(S)=\prod_{p\in S}\left(1-\frac1p\right).
\]
\end{corollary}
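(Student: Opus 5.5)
The plan is to specialize \cref{thm:delta-euler-product} to $i=1$ and evaluate the local factors for $p\notin S$. Once those factors are computed, the product collapses to $\prod_{p\in S}(1-1/p)$.

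For $i=1$, the inner indicator is $\chi_{0,S}$. Since $P_0(k)=\varnothing$ for every $k$ (\cref{def:P_i}), we have $\chi_{0,S}(k)=1$ for all $k\geq1$. Hence, for each $p\notin S$, the local factor in \cref{thm:delta-euler-product} is
\[
        \left(1-\frac1p\right)\left(1+\sum_{k=1}^{\infty}\frac{1}{p^k}\right)=\left(1-\frac1p\right)\left(1-\frac1p\right)^{-1}=1,
\]
using the geometric series $\sum_{k\ge0}p^{-k}=(1-1/p)^{-1}$. The infinite product over $p\notin S$ is therefore identically $1$, and we are left with $\delta_1(S)=\prod_{p\in S}(1-1/p)$.

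As a cross-check, and to avoid relying on the convergence of the infinite product, one can argue from \cref{cor:i1-euler} instead. In its proof, $E_{1,S}=\varnothing$ and $\V_{1,S}=\{1\}$, so by \cref{thm:support-split} $h_{1,S}$ is supported on the divisors $u\mid R_S$, where $h_{1,S}(u)=\mu(u)$. \Cref{lem:density-series} then gives a finite sum,
\[
        \delta_1(S)=\sum_{u\mid R_S}\frac{\mu(u)}{u}=\prod_{p\in S}\left(1-\frac1p\right).
\]
Neither route involves a real obstacle. The only point needing care is confirming that $\chi_{0,S}\equiv1$, so that the local factors for $p\notin S$ are exactly $1$, and this follows directly from $P_0=\varnothing$.
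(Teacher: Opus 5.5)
Your main argument is correct and is the same as the paper's proof. You specialize \cref{thm:delta-euler-product} to $i=1$ and use $\chi_{0,S}\equiv 1$ (from $P_0=\varnothing$), so each local factor for $p\notin S$ is $(1-\frac1p)(1+\frac{1}{p-1})=1$. Your cross-check is also valid but not needed: it uses $\V_{1,S}=\{1\}$ and the finite sum $\sum_{u\mid R_S}\mu(u)/u$ obtained from \cref{lem:density-series}.
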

\begin{proof}
Taking $i=1$ in \cref{thm:delta-euler-product}, the local factor for $p\notin S$ is
\[
        \left(1-\frac1p\right)\left(1+\sum_{k=1}^{\infty}\frac{\chi_{0,S}(k)}{p^k}\right).
\]
Since $\chi_{0,S}(k)=1$ for every $k\in\mathbb Z^+$, we have
\[
        \delta_1(S) = \prod_{p\in S}\left(1-\frac1p\right)\prod_{p\notin S}\left(1-\frac1p\right)\left(1+\frac1{p-1}\right)= \prod_{p\in S}\left(1-\frac1p\right).\qedhere
\]
\end{proof}

\begin{corollary}
\label{cor:delta-two-formula}
Let $R_S=\prod_{q\in S}q$. Then
\[
        \delta_2(S) = \prod_{p\in S}\left(1-\frac{1}{p}\right) \prod_{p\notin S} \left(1-\frac{1}{p}\right) \left(1+\sum_{d\mid R_S}\frac{\mu(d)}{p^d-1}\right).
\]
\end{corollary}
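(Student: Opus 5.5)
We specialize \cref{thm:delta-euler-product} to $i=2$. This gives
\[
        \delta_2(S)=\prod_{p\in S}\Bigl(1-\tfrac1p\Bigr)\prod_{p\notin S}\Bigl(1-\tfrac1p\Bigr)\Bigl(1+\sum_{k\ge1}\chi_{1,S}(k)p^{-k}\Bigr).
\]
It therefore suffices to show, for each prime $p$ (in fact any $p>1$), that
\[
        \sum_{k\geq1}\frac{\chi_{1,S}(k)}{p^k}=\sum_{d\mid R_S}\frac{\mu(d)}{p^d-1}.
\]

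First, as noted in the proof of \cref{thm:E2-structure}, $\chi_{1,S}(k)=1$ exactly when $\gcd(k,R_S)=1$. By the standard M\"obius identity,
\[
        \chi_{1,S}(k)=\sum_{d\mid \gcd(k,R_S)}\mu(d).
\]
Substituting this into the series and interchanging the order of summation gives
\[
        \sum_{k\ge1}p^{-k}\sum_{\substack{d\mid R_S\\ d\mid k}}\mu(d)=\sum_{d\mid R_S}\mu(d)\sum_{j\ge1}p^{-dj}.
\]
The interchange is justified because the outer sum over $d$ is finite and every inner series converges absolutely. Summing the geometric series yields
\[
        \sum_{j\ge1}p^{-dj}=\frac{p^{-d}}{1-p^{-d}}=\frac1{p^d-1},
\]
which is the claimed identity.

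Substituting this local evaluation back into the Euler product proves the corollary. If $S=\varnothing$, then $R_S=1$ and the sum reduces to $1/(p-1)$, which agrees with \cref{cor:delta-legendre}. There is no real obstacle in this argument. The only points to check are the M\"obius expansion of the coprimality indicator and the swap of a finite sum with absolutely convergent series.
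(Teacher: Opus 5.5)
Your proposal is correct and follows the paper's proof essentially verbatim: you specialize \cref{thm:delta-euler-product} to $i=2$, expand $\chi_{1,S}(k)$ via the M\"obius sum over $d\mid\gcd(k,R_S)$, interchange the finite $d$-sum with the series, and sum the geometric series to get $1/(p^d-1)$. Your explicit justification of the interchange and the sanity check for $S=\varnothing$ are small additions that the paper leaves implicit.
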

\begin{proof}
Taking $i=2$ in \cref{thm:delta-euler-product}, it remains to compute $\sum_{k=1}^{\infty}\chi_{1,S}(k)/p^k$. Since $\chi_{1,S}(k)=1$ if and only if $\gcd(k,R_S)=1$, we have
\[
        \chi_{1,S}(k) = \sum_{d\mid\gcd(k,R_S)}\mu(d).
\]
Substituting and switching the order of summation gives
\[
        \sum_{k=1}^{\infty}\frac{\chi_{1,S}(k)}{p^k} = \sum_{d\mid R_S}\mu(d) \sum_{\substack{k\geq 1\\d\mid k}}\frac1{p^k}.
\]
For fixed $d$, the inner sum is $\sum_{j=1}^{\infty}p^{-dj}=1/(p^d-1)$. Hence
\[
        \sum_{k=1}^{\infty}\frac{\chi_{1,S}(k)}{p^k} = \sum_{d\mid R_S}\frac{\mu(d)}{p^d-1}.
\]
Substituting into \cref{thm:delta-euler-product} gives the result.
\end{proof}

\begin{lemma}
\label{lem:truncation}
Let $0<\theta<1$, let $S$ be a finite set of primes, and let $i\geq1$. If
$|\V_{i,S}\cap[1,X]|=O(X^{\theta})$, then
\[
        A_{i,S}(N)=\delta_i(S)N+O_\theta\Bigl(\sum_{u\mid R_S}(N/u)^{\theta}\Bigr).
\]
\end{lemma}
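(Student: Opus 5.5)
We will prove the lemma by writing $A_{i,S}(N)$ as a sum over $u$ and $v$ and comparing it with the absolutely convergent series for $\delta_i(S)$ term by term. As in the proof of \cref{lem:density-series}, M\"obius inversion gives $A_{i,S}(N)=\sum_{d\le N}h_{i,S}(d)\lfloor N/d\rfloor$. By \cref{thm:support-split}, every $d$ with $h_{i,S}(d)\neq0$ factors uniquely as $d=uv$ with $u\mid R_S$, $v\in\V_{i,S}$, and $|h_{i,S}(d)|\le1$. Writing $\lfloor N/d\rfloor=N/d-\theta_{N,d}$ and using \cref{lem:density-series}, we obtain
\[
A_{i,S}(N)-\delta_i(S)N=-N\sum_{d>N}\frac{h_{i,S}(d)}{d}-\sum_{d\le N}h_{i,S}(d)\theta_{N,d}.
\]
It therefore suffices to bound the tail $N\sum_{d>N}|h_{i,S}(d)|/d$ and the count $\sum_{d\le N}|h_{i,S}(d)|$.

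The count is immediate from the factorization. Since $|h_{i,S}(uv)|\le 1$,
\[
\sum_{d\le N}|h_{i,S}(d)|\le\sum_{u\mid R_S}|\V_{i,S}\cap[1,N/u]|\ll\sum_{u\mid R_S}(N/u)^\theta .
\]
For the tail, split in the same way:
\[
N\sum_{d>N}\frac{|h_{i,S}(d)|}{d}\le\sum_{u\mid R_S}\frac Nu\sum_{\substack{v\in\V_{i,S}\\ v>N/u}}\frac1v .
\]
It remains to show $\sum_{v\in\V,\,v>Y}1/v\ll_\theta Y^{\theta-1}$ for $Y>0$, given $V(X):=|\V_{i,S}\cap[1,X]|\ll X^\theta$. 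Partial summation gives
\[
\sum_{v>Y}\frac1v=\int_Y^\infty\frac{dV(t)}{t}=-\frac{V(Y)}{Y}+\int_Y^\infty\frac{V(t)}{t^2}\,dt\le\int_Y^\infty Ct^{\theta-2}\,dt=\frac{C}{1-\theta}Y^{\theta-1}.
\]
The boundary term at infinity vanishes because $\theta<1$. For $Y<1$ the sum is at most $\sum_{v}1/v$, which is finite and bounded by $C/(1-\theta)$, so the bound still holds with a suitable constant. Multiplying by $N/u=Y$ gives $\ll (N/u)^\theta$.

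Summing the two contributions over $u\mid R_S$ yields the claim, with implied constant depending on $\theta$ and on the constant in the hypothesis.

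The only delicate point is the partial-summation tail estimate, in particular handling the range $N/u<1$ and checking that the boundary terms behave. The rest is bookkeeping using \cref{thm:support-split} and \cref{lem:density-series}.
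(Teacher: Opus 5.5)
Your proposal is correct and follows the paper's proof essentially step for step. It uses the same split into $\sum_{d\le N}|h_{i,S}(d)|$ plus the tail $N\sum_{d>N}|h_{i,S}(d)|/d$, the same factorization $d=uv$ from \cref{thm:support-split}, and the same partial-summation bound $\sum_{v>N/u}1/v\ll_\theta (N/u)^{\theta-1}$. Your explicit handling of the boundary terms and of the range $N/u<1$ fills in details that the paper leaves implicit.
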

\begin{proof}
By \cref{lem:density-series} and $\lfloor N/d\rfloor=N/d+O(1)$,
\[
        |A_{i,S}(N)-\delta_i(S)N|\le\sum_{d\le N}|h_{i,S}(d)|+N\sum_{d>N}\frac{|h_{i,S}(d)|}{d}.
\]
By \cref{thm:support-split} every $d$ with $h_{i,S}(d)\neq0$ is $uv$ with $u\mid R_S$, $v\in\V_{i,S}$, and $|h_{i,S}(d)|\leq1$, so the right-hand side is at most
\[
        \sum_{u\mid R_S}\Bigl(|\V_{i,S}\cap[1,N/u]|+\frac {N}{u}\sum_{\substack{v\in\V_{i,S}\\ v>N/u}}\frac{1}{v}\Bigr).
\]
The hypothesis with $X=N/u$ gives $|\V_{i,S}\cap[1,N/u]|=O((N/u)^{\theta})$, and partial summation gives 
\[
        \sum_{\substack{v\in\V_{i,S}\\ v>N/u}}\frac{1}{v}=O\left((N/u)^{\theta-1}\right),
\]
hence each summand is $O_\theta((N/u)^{\theta})$.
\end{proof}

\begin{theorem}
\label{thm:quantitative-density}
Let $S\neq \varnothing$, let $i\geq 2$, and let $q=\min S$. Then
\[
        A_{i,S}(N) = \delta_i(S)N+O_{i,S}(N^{1/q}).
\]
\end{theorem}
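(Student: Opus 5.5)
The plan is to apply \cref{lem:truncation} with $\theta=1/q$. Since $S\neq\varnothing$, $i\geq 2$ and $q=\min S$, \cref{prop:V-upper} gives
\[
|\V_{i,S}\cap[1,X]|=O_{i,S}(X^{1/q})
\]
for all $X\geq 1$. For $X<1$ the count is $0$, or $1$ if we count $v=1$, so the bound $O(X^{1/q})$ holds there only after harmless adjustment. \cref{lem:truncation} is only ever applied with $X=N/u\geq$ something positive, and we will treat small $N/u$ separately below. Note that $0<1/q\leq 1/2<1$, so the hypothesis $0<\theta<1$ of \cref{lem:truncation} is met.

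Feeding this into \cref{lem:truncation} yields
\[
A_{i,S}(N)=\delta_i(S)N+O\Bigl(\sum_{u\mid R_S}(N/u)^{1/q}\Bigr).
\]
The implied constant depends on $q$ and on the constant from \cref{prop:V-upper}, hence only on $i$ and $S$. The sum has $2^{|S|}$ terms, each at most $N^{1/q}$ because $u\geq 1$. Therefore the error is at most $2^{|S|}N^{1/q}$, which is $O_{i,S}(N^{1/q})$.

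The only point requiring care is the range $N/u<1$, that is, divisors $u>N$. I expect this to be the main, though mild, obstacle. The count $|\V_{i,S}\cap[1,N/u]|$ is then $0$, so that term is fine. The tail term $\frac{N}{u}\sum_{v>N/u}1/v$ is at most $\frac{N}{u}\sum_{v\in\V_{i,S}}1/v$, and this is $O_S(1)\leq O_S(N^{1/q})$ since $N\geq 1$ and the series converges by the argument in \cref{lem:density-series}.

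The partial-summation tail bound $\sum_{v>Y}1/v=O(Y^{1/q-1})$ for $Y\geq 1$ follows from \cref{prop:V-upper} as in \cref{lem:truncation}. So all terms are $O_{i,S}(N^{1/q})$, which proves the theorem.
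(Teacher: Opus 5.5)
Your proposal is correct and matches the paper's proof: you apply \cref{lem:truncation} with $\theta=1/q$ using the bound from \cref{prop:V-upper}, then absorb $\sum_{u\mid R_S}(N/u)^{1/q}\le 2^{|S|}N^{1/q}$ into $O_{i,S}(N^{1/q})$. Your separate treatment of divisors $u>N$ is harmless but unnecessary, because for $N/u<1$ the count term is $0$ and the tail term is $O_S(N/u)\le O_S\bigl((N/u)^{1/q}\bigr)$, so the lemma already covers that range.
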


\begin{proof}
By \cref{prop:V-upper}, $|\V_{i,S}\cap[1,X]|=O_{i,S}(X^{1/q})$, so
\cref{lem:truncation} gives
\[
        A_{i,S}(N)=\delta_i(S)N+O_{i,S}\bigl(\sum_{u\mid R_S}(N/u)^{1/q}\bigr)=\delta_i(S)N+O_{i,S}(N^{1/q}),
\]
since $\sum_{u\mid R_S}u^{-1/q}$ is constant for fixed $S$.
\end{proof}

\section{A Uniform Approximation for the Level Totient}
\label{sec:uniform-approximation}

The preceding section studies the density $\delta_i(S)$ for a fixed set $S$. To estimate $\varphi_i(n)/n$, we set $S = P_i(n)$. Since this set varies with $n$, we prove an estimate uniform in $n$.

\begin{lemma}
\label{lem:radical-bound}
For every $i\geq 0$ and every $n\in\Z^+$,
\[
        \prod_{p\in P_i(n)}p\leq n.
\]
\end{lemma}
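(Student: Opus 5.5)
We will prove the bound by strong induction on $n$, with the level $i$ arbitrary and fixed inside each induction step. The cases $n=1$ and $i=0$ are trivial, since then $P_i(n)=\varnothing$ and the empty product equals $1\le n$. For $n>1$ and $i\ge1$, write $n=\prod_{j=1}^k p_j^{\alpha_j}$ canonically. By \cref{def:P_i},
\[
P_i(n)=\{p_1,\ldots,p_k\}\cup\bigcup_{j=1}^k P_{i-1}(\alpha_j).
\]
Since the product over a union of sets is at most the product of the products over the pieces (all factors being at least $2$), it suffices to show
\[
\prod_{j=1}^k\Bigl(p_j\cdot\prod_{p\in P_{i-1}(\alpha_j)}p\Bigr)\le \prod_{j=1}^k p_j^{\alpha_j}.
\]
Indeed, this gives the claim after discarding the repeated primes, which only decreases the left-hand side.

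The statement therefore reduces to a local inequality for each $j$:
\[
p_j\prod_{p\in P_{i-1}(\alpha_j)}p\le p_j^{\alpha_j}.
\]
Since $\alpha_j\le 2^{\alpha_j-1}<n$ whenever $n>1$, the induction hypothesis applies to $\alpha_j$ at level $i-1$ and gives $\prod_{p\in P_{i-1}(\alpha_j)}p\le\alpha_j$. It then suffices to check the elementary inequality $p\,\alpha\le p^{\alpha}$, that is, $\alpha\le p^{\alpha-1}$, for every prime $p\ge2$ and integer $\alpha\ge1$. This follows from $\alpha\le 2^{\alpha-1}$, which holds with equality at $\alpha=1$ and by an easy induction on $\alpha$ thereafter.

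The only point that needs care is the bookkeeping. The induction must be on $n$ with the statement quantified over all levels, or equivalently a double induction, because the recursion lowers both the integer (from $n$ to $\alpha_j$) and the level (from $i$ to $i-1$). We also need the remark that overlaps among the sets $\{p_j\}$ and $P_{i-1}(\alpha_j)$ only help. Neither is a real obstacle, so the main step is simply the local inequality $\alpha\le p^{\alpha-1}$.
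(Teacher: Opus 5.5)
Your proof is correct and is essentially the paper's argument: the same decomposition $P_i(n)=\{p_1,\ldots,p_k\}\cup\bigcup_j P_{i-1}(\alpha_j)$, the same bound of the product over a union by the product of the pieces, and the same local inequality $\alpha\le p^{\alpha-1}$. The only difference is the induction. The paper inducts on $i$ alone, and since the hypothesis at level $i-1$ holds for every integer, it applies to each $\alpha_j$ directly. So your strong induction on $n$ is valid but not necessary, contrary to your remark that the induction must be on $n$ or be a double induction.
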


\begin{proof}
We argue by induction on $i$. For $i=0$, the product is empty and equals $1$, so the result is immediate. For $i\ge1$ write $n=\prod_{j=1}^k p_j^{\alpha_j}$. By \cref{def:P_i},
\[
        P_i(n)=\{p_1,\ldots,p_k\}\cup\bigcup_{j=1}^k P_{i-1}(\alpha_j).
\]
By the induction hypothesis $\prod_{p\in P_{i-1}(\alpha_j)}p\le\alpha_j$, and $\alpha_j\leq p_j^{\alpha_j-1}$ for every prime $p_j$ and integer $\alpha_j\geq1$, so
\[
        \prod_{p\in P_i(n)}p
        \leq \left(\prod_{j=1}^k p_j\right)\left(\prod_{j=1}^k\prod_{p\in P_{i-1}(\alpha_j)}p\right)
        \leq\prod_{j=1}^k p_j\alpha_j
        \leq\prod_{j=1}^k p_j^{\alpha_j}=n. \qedhere
\]
\end{proof}

\begin{lemma}
\label{lem:small-prime-product}
For every $\varepsilon>0$,
\[
        \prod_{p\in S}\left(1+p^{-1/2}\right)=O_\varepsilon\left(\prod_{p\in S}p^\varepsilon\right),
\]
uniformly over all finite sets of primes $S$.
\end{lemma}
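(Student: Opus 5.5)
The plan is to compare the two products factor by factor and split the primes of $S$ according to size. For each prime $p$, set
\[
        f(p)=\frac{1+p^{-1/2}}{p^{\varepsilon}} .
\]
Then
\[
        \prod_{p\in S}\left(1+p^{-1/2}\right)\Big/\prod_{p\in S}p^\varepsilon=\prod_{p\in S}f(p),
\]
so it suffices to show that $\prod_{p\in S}f(p)$ is bounded above by a constant depending only on $\varepsilon$, uniformly in the finite set $S$.

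The key observation is that $f(p)\le 1$ once $p$ is large. Indeed, $1+p^{-1/2}\le 2$ for every prime, and $p^\varepsilon\ge 2$ as soon as $p\ge 2^{1/\varepsilon}$. Therefore every prime $p\ge 2^{1/\varepsilon}$ contributes a factor at most $1$, and dropping those factors only increases the product. This gives
\[
        \prod_{p\in S}f(p)\le\prod_{\substack{p\in S\\ p<2^{1/\varepsilon}}}f(p)\le\prod_{p<2^{1/\varepsilon}}\max\bigl(1,f(p)\bigr)\le\prod_{p<2^{1/\varepsilon}}2 .
\]
In the middle step we enlarge the index set to all primes below $2^{1/\varepsilon}$; this is harmless because each factor is replaced by $\max(1,f(p))\ge 1$. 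In the last step we use $f(p)\le 1+p^{-1/2}\le 2$, since $p^{\varepsilon}\ge 1$.

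The final bound is $2^{\pi(2^{1/\varepsilon})}$, which depends only on $\varepsilon$. That is exactly the required constant $O_\varepsilon(1)$, uniform over all finite $S$. I see no real obstacle here. The only point to check is that the threshold is independent of $S$, and it is, since it is determined by $\varepsilon$ alone. One could sharpen the constant, for instance to $\exp(O(2^{1/(2\varepsilon)}))$, but the statement does not need this.
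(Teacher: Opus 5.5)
Your proof is correct and follows essentially the same route as the paper: pick an $\varepsilon$-dependent threshold beyond which $1+p^{-1/2}\le p^{\varepsilon}$, then bound the contribution of the finitely many smaller primes by a constant that depends only on $\varepsilon$. The only difference is that you make the threshold explicit ($2^{1/\varepsilon}$, via the crude bound $1+p^{-1/2}\le 2$), whereas the paper gets its $M_\varepsilon$ from the limit $\log(1+p^{-1/2})/\log p\to 0$.
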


\begin{proof}
Since
\[
        \frac{\log(1+p^{-1/2})}{\log p}\to0
\]
as $p\to\infty$, there exists $M_\varepsilon$ such that $1+p^{-1/2}\leq p^\varepsilon$ for every prime $p>M_\varepsilon$. Hence
\[
        \prod_{p\in S}(1+p^{-1/2})\leq\prod_{p\leq M_\varepsilon}(1+p^{-1/2})\prod_{\substack{p\in S\\ p>M_\varepsilon}}p^\varepsilon\leq C_\varepsilon\prod_{p\in S}p^\varepsilon,
\]
where $C_\varepsilon=\prod_{p\leq M_\varepsilon}(1+p^{-1/2})$.
\end{proof}

\begin{theorem}
\label{thm:uniform-approximation}
For every $i\geq 1$ and every $\varepsilon>0$,
\[
        \frac{\varphi_i(n)}n=\delta_i(P_i(n))+O_{\varepsilon}(n^{-1/2+\varepsilon}).
\]
\end{theorem}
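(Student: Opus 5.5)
Set $S=P_i(n)$, so that $\varphi_i(n)=A_{i,S}(n)$ by \cref{def:level-totient}. The goal is an error bound uniform in $S$, so the proof cannot go through \cref{thm:quantitative-density}, whose constant depends on $S$. Instead we redo the argument of \cref{lem:truncation} using only that $\V_{i,S}$ consists of powerful integers, a property uniform in $S$ and $i$. That argument, with $|h_{i,S}(d)|\le1$ and the factorization $d=uv$ from \cref{thm:support-split}, gives
\[
        |A_{i,S}(n)-\delta_i(S)n|\le\sum_{u\mid R_S}\Bigl(\#\{v\text{ powerful}:v\le n/u\}+\frac nu\sum_{\substack{v\text{ powerful}\\ v>n/u}}\frac1v\Bigr).
\]
The case $i=1$ is covered as well, since then $\V_{1,S}=\{1\}$ is trivially contained in the powerful numbers.

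Next we use the classical bound $\#\{v\le X: v\text{ powerful}\}\le cX^{1/2}$ with an absolute constant $c$. This follows, for example, from writing each powerful $v$ as $a^2b^3$ with $b$ squarefree, which gives a count at most $\sum_b (X/b^3)^{1/2}\ll X^{1/2}$. Partial summation then gives $\sum_{v>X,\,v\text{ powerful}}1/v\ll X^{-1/2}$, again with an absolute constant. Each summand above is therefore $\ll (n/u)^{1/2}$, and
\[
        |\varphi_i(n)-\delta_i(S)n|\ll n^{1/2}\sum_{u\mid R_S}u^{-1/2}=n^{1/2}\prod_{p\in S}\bigl(1+p^{-1/2}\bigr).
\]

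Finally, \cref{lem:small-prime-product} bounds the product by $O_\varepsilon\bigl(\prod_{p\in S}p^{\varepsilon}\bigr)$, and \cref{lem:radical-bound} gives $\prod_{p\in P_i(n)}p\le n$, so the product is $O_\varepsilon(n^\varepsilon)$. Dividing by $n$ yields the stated $O_\varepsilon(n^{-1/2+\varepsilon})$, uniformly in $n$ and in $i$.

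The main obstacle is ensuring that no constant depends on $S$ or $i$. This is why we discard the sharper $q$-full information and use only the powerful-number bound. The only nonuniform quantity left, the divisor sum over $R_S$, is controlled by the two preceding lemmas. The value of $\delta_i(S)$ itself enters only through the absolutely convergent series of \cref{lem:density-series}, which holds for every finite $S$.
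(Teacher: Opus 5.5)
Your proof is correct and follows essentially the same route as the paper. Like the paper, you set $S=P_i(n)$, bound the truncation error of \cref{lem:truncation} using only that $\V_{i,S}$ consists of powerful numbers, rewrite $\sum_{u\mid R_S}u^{-1/2}$ as $\prod_{p\in S}(1+p^{-1/2})$, and finish with \cref{lem:small-prime-product} and \cref{lem:radical-bound}. Your only refinement is to rerun the truncation argument with the full set of powerful numbers rather than cite the lemma, which makes explicit that the implied constants are independent of $S$ and $i$; the paper's application of \cref{lem:truncation} leaves that point implicit.
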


\begin{proof}
Set $S=P_i(n)$, so $\varphi_i(n)=A_{i,S}(n)$. Every element of $\V_{i,S}$ is
powerful, so $|\V_{i,S}\cap[1,X]|=O(X^{1/2})$ \cite{Golomb1970}. By
\cref{lem:truncation},
\begin{align*}
        \varphi_i(n)
        &=n\,\delta_i(P_i(n)) +O\Bigl(\sum_{u\mid R_S}(n/u)^{1/2}\Bigr) \\
        &=n\,\delta_i(P_i(n))+O\Bigl(n^{1/2}\prod_{p\in S}(1+p^{-1/2})\Bigr).
\end{align*}
By \cref{lem:small-prime-product}, $\prod_{p\in S}(1+p^{-1/2})=O_\varepsilon(\prod_{p\in S}p^{\varepsilon})$, and \cref{lem:radical-bound} gives $\prod_{p\in S}p\le n$. Hence, the error is $O_\varepsilon(n^{1/2+\varepsilon})$, and the claim follows.
\end{proof}

\printbibliography

\end{document}